                    \def\version{July 20, 2016}                       %
\def\@rmrk#1#2{\refstepcounter
    {#1}\@ifnextchar[{\@yrmrk{#1}{#2}}{\@xrmrk{#1}{#2}}}
\makeatletter\@addtoreset{equation}{section}\makeatother
 \newfont{\bfit}{cmbxti10 scaled 1200}
\renewcommand{\d}{{\rm d}}
 \newcommand{\e}{{\rm e} }
 \newcommand{\eps}{\varepsilon}
 \newcommand{\dist}{{\rm dist}}
 \newcommand{\diam}{{\rm diam}}
 \newcommand{\R}{\mathbb{R}}
 \newcommand{\N}{\mathbb{N}}
 \newcommand{\Z}{\mathbb{Z}}
 \newcommand{\E}{\mathbb{E}}
 \renewcommand{\P}{\mathbb{P}}
 \renewcommand{\thefootnote}{\arabic{footnote}}
 \def\1{{\mathchoice {1\mskip-4mu\mathrm l} 
{1\mskip-4mu\mathrm l}
{1\mskip-4.5mu\mathrm l} {1\mskip-5mu\mathrm l}}}
 \newcommand{\Mcal}{{\mathcal M}}
\newcommand{\heap}[2]{\genfrac{}{}{0pt}{}{#1}{#2}}
\newcommand{\ssup}[1] {{\scriptscriptstyle{({#1}})}}
\renewcommand{\subsection}{\secdef \subsct\sbsect}
\newcommand{\subsct}[2][default]{\refstepcounter{subsection}
\vspace{0.15cm}
{\flushleft\bf \arabic{section}.\arabic{subsection}~\bf #1  }
\nopagebreak\nopagebreak}
\newcommand{\sbsect}[1]{\vspace{0.1cm}\noindent
{\bf #1}\vspace{0.1cm}}
\newtheorem{theorem}{Theorem}[section]
\newtheorem{lemma}[theorem]{Lemma}
\newtheorem{cor}[theorem]{Corollary}
\newtheorem{prop}[theorem]{Proposition}
\newtheoremstyle{thm}{1.5ex}{1.5ex}{\itshape\rmfamily}{}
{\bfseries\rmfamily}{}{2ex}{}
\newtheoremstyle{rem}{1.3ex}{1.3ex}{\rmfamily}{}
{\itshape\rmfamily}{}{1.5ex}{}
\theoremstyle{rem}
\def\thebibliography#1{\section*{References}
  \list%
  {\arabic{enumi}.}
    {\settowidth\labelwidth{[#1]}\leftmargin\labelwidth
    \advance\leftmargin\labelsep
    \parsep0pt\itemsep0pt
    \usecounter{enumi}}
    \def\newblock{\hskip .11em plus .33em minus .07em}
    \sloppy                   
    \sfcode`\.=1000\relax}
\begin{document}
\title[A rigorous construction of the Pekar process]
{\large Mean-field interaction of Brownian occupation measures. II: A rigorous construction of the Pekar process}
\author[Erwin Bolthausen, Wolfgang K\"onig and Chiranjib Mukherjee ]{}
\maketitle
\thispagestyle{empty}
\vspace{-0.5cm}

\centerline{\sc Erwin Bolthausen\footnote{Institut f\"ur Mathematik, Universit\"at Z\"urich, Winterthurerstrasse 190, Zurich 8057, {\tt eb@math.uzh.ch}}, 
Wolfgang K\"onig\footnote{TU Berlin and WIAS Berlin, Mohrenstra{\ss}e 39, 10117 Berlin, {\tt koenig@wias-berlin.de}} 
and Chiranjib Mukherjee\footnote{Courant Institute of Mathematical Sciences, 251 Mercer Street, New York 10112, {\tt mukherjee@cims.nyu.edu},  on leave from WIAS}}
\renewcommand{\thefootnote}{}
\footnote{\textit{AMS Subject
Classification:} 60J65, 60J55, 60F10.}
\footnote{\textit{Keywords:} Polaron problem, Gibbs measures. large deviations, Coulomb functional, tightness}

\vspace{-0.5cm}
\centerline{\textit{University of Zurich, WIAS Berlin and TU Berlin, Courant Institute New York and WIAS Berlin}}
\vspace{0.2cm}

\begin{center}
\version
\end{center}

\begin{abstract}

We consider mean-field interactions corresponding to Gibbs measures on interacting Brownian paths in three dimensions. The interaction is self-attractive and is given by a singular Coulomb 
potential. The logarithmic asymptotics of the partition function for this model were identified in the 1980s by Donsker and Varadhan \cite{DV83-P} in terms of the {\it{Pekar variational formula}},
which coincides with the behavior of the partition function of the {\it{polaron problem}} under strong coupling. Based on this, in 1986 Spohn (\cite{Sp87}) made a heuristic observation 
that the strong coupling behavior of the polaron path measure, on certain time scales, should  resemble a process, named as the {\it{Pekar process}}, whose distribution could somehow be guessed from the limiting asymptotic behavior of the mean-field measures under interest, whose rigorous analysis remained open. The present paper is devoted to a precise analysis of these mean-field path measures and
convergence of the normalized occupation measures towards an explicit mixture of the maximizers of the Pekar variational problem.
This leads to a rigorous construction of the aforementioned Pekar process and hence, is a contribution to the understanding of the 
``mean-field approximation" of the polaron problem on the level of path measures.

The method of our proof is based on the compact large deviation theory developed in \cite{MV14}, its extension to the uniform strong metric 
for the singular Coulomb interaction carried out  in \cite{KM15}, as well as an idea inspired by a {\it{partial path exchange}} argument
appearing in \cite{BS97}. 

\end{abstract}

\maketitle   

\maketitle   



\section{Motivation and introduction}\label{intro}

\subsection{Motivation.} Questions on path measures pertaining to self-attractive random motions, or Gibbs measures on interacting random paths, are often motivated by the important r\^{o}le 
they play in quantum statistical mechanics. A problem similar in spirit to these considerations is connected with the {\it polaron problem}. 
The physical question arises from the description of the slow movement of a charged particle, e.g. an electron, in a crystal whose lattice sites are polarized by this motion, 
influencing the behavior of the electron and determining its {\it{effective behavior}}. For the physical relevance of this model, we refer to
the lectures by Feynman \cite{F72}. The mathematical layout of this problem was also founded by Feynman. Indeed, he introduced a path integral formulation of this problem and pointed out that the aforementioned effective behavior can be studied via studying a certain path measure. This measure is written in terms of a three dimensional
Brownian motion acting under a self-attractive Coulomb interaction: 
$$
\widehat\P_{\lambda,t}(\d \omega)= \frac 1 {Z_{\lambda,t}} \exp\bigg\{\lambda \int_0^t\int_0^t \d\sigma \d s \frac{\e^{-\lambda|\sigma-s|}}{|\omega_\sigma- \omega_s|}\bigg\} \,\, \P(\d \omega),
$$
where $\lambda>0$ is a parameter, $\P$ refers to the three dimensional Wiener measure and $Z_{\lambda,t}$ is the normalization constant or partition function. One calls $\alpha=1/\sqrt{\lambda}$ the {\it{coupling parameter}}. The physically relevant regime is the  {\it{strong coupling limit}} as $\alpha\to \infty$, i.e., $\lambda\to0$.

We remark that the above interaction is {\it{self-attractive}}, as the asymptotic behavior of the path measure is essentially determined by those
paths which make $|\omega_\sigma- \omega_s|$ small, when $|\sigma- s|$ is also small. In other words, these paths tend to clump together on short time scales. 

The asymptotic behavior of the partition function $Z_{\lambda,t}$ in the limit $t\to\infty$, followed by $\lambda\to0$, was rigorously studied  by Donsker and Varadhan (\cite{DV83-P}). The intuition says that, for small $\lambda$, the
interaction should get more and more smeared out and should approach the {\it mean-field interaction} 
$$
\frac 1t\int_0^t\int_0^t \d \sigma\d s \,\frac 1{\big|\omega_\sigma-\omega_s\big|}.
$$
Donsker and Varadhan proved this intuition on the level of the logarithmic large $t$-asymptotics for the partition function leading to an explicit variational formula. More precisely, they showed that that the large-$t$ asymptotics of $Z_{\lambda,t}$ coincides in the limit $\lambda\to 0$ with the large-$t$ asymptotics of
the {\it mean field partition function} 
\begin{equation}\label{partfn}
Z_t= \E\bigg[ \exp\bigg\{\frac 1t\int_0^t\int_0^t \d \sigma\d s \,\frac 1{\big|W_\sigma-W_s\big|}\bigg\}\bigg],
\end{equation}
where $(W_s)_{s\in[0,\infty)}$ denotes a three-dimensional standard Brownian motion. In other words, they proved {\it Pekar's conjecture} (\cite{P49}) and derived the following identification of the free energy variational formula:
\begin{equation}\label{Pekarfor}
 \begin{aligned}
\lim_{\lambda\to 0}\lim_{t\to\infty}\frac 1t\log Z_{\lambda,t}&= \lim_{t\to\infty}\frac 1t\log Z_t \\
&=\sup_{\heap{\psi\in H^1(\R^3)}{\|\psi\|_2=1}} 
\Bigg\{\int_{\R^3}\int_{\R^3}\d x\d y\,\frac {\psi^2(x) \psi^2(y)}{|x-y|} -\frac 12\big\|\nabla \psi\big\|_2^2\Bigg\},
\end{aligned}
\end{equation}
with $H^1(\R^3)$ denoting the usual Sobolev space of square integrable
functions with square integrable gradient. The variational formula in \eqref{Pekarfor} was analyzed by Lieb (\cite{L76}) with the result that there is a rotationally symmetric maximizer $\psi_0$, which is unique modulo spatial shifts. 

Given  \eqref{Pekarfor}, it is natural to guess that 
the polaron path measure should somehow be related to the mean-field path measure, given by
\begin{equation}\label{Phat}
\widehat{\P}_t(\d \omega)=\frac 1 { Z_t}\, \exp\bigg\{\frac 1t\int_0^t\int_0^t \d \sigma\d s \,\frac 1{\big|\omega_\sigma-\omega_s\big|}\bigg\} \, \P(\d \omega). 
\end{equation}
However, even a clear formulation of such a relation is by far not obvious. Spohn (\cite{Sp87}) presented a heuristic analysis of the effective behavior of the polaron measure $\widehat\P_{\lambda,t}$ for $\lambda\sim 0$,
whose rigorous asymptotic analysis remains open. The heuristic discussion in \cite{Sp87} is based on the idea that, for the strong coupling regime (i.e, $\lambda\to 0$), on time scales of order $\lambda^{-1}$, the polaron measure $\widehat\P_{\lambda,t}$ should resemble a process, named as the {\it{Pekar process}}, whose empirical distribution can be guessed from the limiting asymptotic behavior of $\widehat\P_t\circ L_t^{-1}$, the distributions of the normalized Brownian occupation measures $L_t=\frac 1t\int_0^t\d s\, \delta_{W_s}$, under the mean-field transformations $\widehat\P_t$, whose rigorous analysis was also left open. 

This paper is devoted to a precise analysis of the limiting behavior 
of the distributions of mean-field path measures $\widehat\P_t\circ L_t^{-1}$ and a rigorous construction of the law of the aforementioned Pekar process. Hence, it is a contribution to the understanding of the mean-field approximation of the polaron problem on the level of path measures.

\subsection{The model and the problem.} 

\noindent We turn to a precise formulation and the mathematical layout of the problem. We start with the Wiener measure $\P$ on $\Omega=C([0,\infty),\R^3)$ corresponding to a $3$-dimensional Brownian motion $W=(W_t)_{t\geq 0}$ starting from the origin. 
 Let  
\begin{equation}\label{OcMeas}
L_t = \frac 1t \int_0^t  \d s \, \delta_{W_s}
\end{equation}
be the normalized occupation measure of $W$ until time $t$. This is a random element of $\Mcal_1(\R^3)$, the space of probability measures on $\R^3$. Then the mean-field path measure $\widehat{\P}_t$ defined in \eqref{Phat} can be written as 
$$
\widehat{\P}_t(A)=\frac 1{Z_t}\, \E\big[\1_A\, \exp\big\{t H(L_t)\big\}\big], \qquad A\subset \Omega,
$$
where $Z_t$ is the normalizing constant defined in \eqref{partfn} and 
\begin{equation}\label{Hdef}
H(\mu)= \int_{\R^3}\int_{\R^3}  \frac {\mu(\d x)\,\mu(\d y)}{|x-y|},\qquad \mu\in\Mcal_1(\R^3),
\end{equation}
denotes the {\it{Coulomb potential energy functional}} of $\mu$, or the Hamiltonian. It is the goal of the present paper to analyze and identify the limiting distribution 
of $L_t$ under $\widehat\P_t$.

 We recall that the partition function $Z_t=\E\big[\exp\big\{t H(L_t)\big\}\big]$, which is finite in $\R^3$, was analyzed by Donsker and Varadhan \cite{DV83-P} resulting in the variational formula 
\eqref{Pekarfor}. For future reference, let us write
 \begin{equation}\label{rhodef}
 \begin{aligned}
 \rho= \sup_{\mu\in \Mcal_1(\R^3)} \bigg\{ H(\mu)- I(\mu)\bigg\}
=\sup_{\heap{\psi\in H^1(\R^3)}{\|\psi\|_2=1}} 
\Bigg\{\int_{\R^3}\int_{\R^3}\d x\d y\,\frac {\psi^2(x) \psi^2(y)}{|x-y|} -\frac 12\big\|\nabla \psi\big\|_2^2\Bigg\},
\end{aligned}
\end{equation}
where we introduced the functional 
\begin{equation}\label{Idef}
I(\mu)=\frac 12\|\nabla\psi\|_2^2
\end{equation}
if $\mu$ has a density $\psi^2$ with $\psi\in H^1(\R^3)$, and $I(\mu)=\infty$ otherwise. 
Note that this is the rate function for the classical weak large deviation theory for the distribution of $L_t$ under $\P$, developed by Donsker and Varadhan (\cite{DV75-1}- \cite{DV83-4}). We remark that both $H$ and $I$ are shift-invariant functionals, i.e., $H(\mu)=H(\mu\star\delta_x)$  and $I(\mu)=I(\mu\star\delta_x)$ for any $x\in\R^3$. 

We also recall (\cite{L76}) that the variational formula \eqref{rhodef} possesses a smooth, rotationally symmetric and centered maximizer $\psi_0$, which is unique modulo spatial translations. In other words, if $\mathfrak m$ denotes the set of maximizing densities, then
\begin{equation}\label{shiftunique}
\mathfrak m=  \big\{\mu_0 \star \delta_x\colon x\in \R^3\big\},
\end{equation}
where $\mu_0$ is the probability measure with density $\psi_0^2$. We will often write $\mu_x=\mu_0\star \delta_x$ and write $\psi_x^2$ for its density.
 
Note that given \eqref{Pekarfor} and \eqref{shiftunique}, we expect the distribution of $L_t$ under the transformed measure $\widehat \P_t$ to concentrate around $\mathfrak m$ and, even more, to converge towards 
a mixture of spatial shifts of $\mu_0$, thanks to the uniqueness statement \eqref{shiftunique} for the free energy variational problem \eqref{rhodef}. Such a precise analysis was carried out by Bolthausen and Schmock \cite{BS97} for a spatially discrete version of $\widehat\P_t$, i.e., for the continuous-time simple random walk on $\Z^d$ instead of Brownian motion and a bounded interaction potential $V\colon \Z^d \rightarrow [0,\infty)$ with finite support instead of the singular Coulomb potential $x\mapsto 1/|x|$. A first key step in \cite{BS97} was to show that, under the transformed measure, the probability of the local times falling outside any neighborhood of the maximizers decays exponentially. For its proof, the lack of a strong large deviation principle (\cite{DV75-1}-\cite{DV83-4}) for the local times was handled by an extended version of a standard periodization procedure by folding the random walk into some large torus. Combined with this, an explicit tightness property of the distributions of the local times led 
to an identification of the limiting distribution.

However, in the continuous setting with a singular Coulomb interaction, the aforementioned periodization technique or any standard compactification procedure does not work well to circumvent the lack of a strong large deviation principle. An investigation of $\widehat\P_t \circ L_t^{-1}$, the distribution of  $L_t$ under $\widehat \P_t$, remained open until a recent result \cite{MV14} rigorously justified the above heuristics, leading to the statement 
\begin{equation}\label{tubeweak}
\limsup_{t\to\infty} \frac 1t \log \widehat\P_t \big\{L_t \notin U(\mathfrak m)\big\}<0,
\end{equation}
where $U(\mathfrak m)$ is any neighborhood of $\mathfrak m$ in the weak topology induced by the Prohorov metric, the metric that is induced by all the integrals against continuous bounded test functions. \eqref{tubeweak} implies that the distribution of $L_t$ under $\widehat \P_t$ is asymptotically concentrated around $\mathfrak m$. Since a one-dimensional picture of $\mathfrak m$ reflects an infinite line, its neighborhood resembles an infinite tube. Therefore, assertions similar to \eqref{tubeweak} are referred to as {\em tube properties}. 

It is worth pointing out that although \eqref{tubeweak} requires only the weak topology in the statement, its proof is crucially based on a robust theory of {\it{compactification}} $\widetilde {\mathcal X}$ of the quotient space
$$
\widetilde\Mcal_1(\R^d) \hookrightarrow \widetilde {\mathcal X}
$$ 
of orbits $\widetilde \mu=\{\mu \star\delta_x\colon x\in\R^3\}$ of probability measures $\mu$ on $\R^d$ under translations and a full large deviation principle for the distributions of $\widetilde L_t\in \widetilde\Mcal_1(\R^d)$ embedded in the compactification. In particular, this is based on a topology induced by a different metric in the compactfication $\widetilde {\mathcal X}$. However, the statement \eqref{tubeweak} simply drops out from this abstract
set up, thanks to the shift invariance of the Hamiltonian $H(\mu)$ as well as the rate function $I(\mu)$.

Note that, even after proving \eqref{tubeweak}, since $\mathfrak m$ and hence any neighborhood of this shift-invariant set is non-compact, the occupation measures $L_t^{-1}$ under $\widehat\P_t$ could still 
fluctuate wildly in the infinite tube. The crucial result of the present article says that this can happen only with small $\widehat\P_t$-probability and that the distributions $\widehat\P_t \circ L_t^{-1}$ converge weakly to an explicit mixture of the elements in $\mathfrak m$. The process corresponding to this mixture is the aforementioned Pekar process.

\section{Main results}

\noindent We turn to the statements of our main results. 

\subsection{Convergence of $\boldsymbol{\widehat\P_t \circ L_t^{-1}}$ and identification of the limit.}

\noindent Recall that $\psi_0$ is the  unique radially symmetric and centered maximizer of the Pekar variational problem \eqref{rhodef} and that $\mu_x$ denotes
the probability measure with density $\psi_x^2= \psi_0^2 \star \delta_x$. Here is the main result of the present paper.

\begin{theorem}\label{thm1}
Let $\widehat{\mathbb Q}_t$ denote the distribution of $L_t$ under $\widehat\P_t$. Then,
\begin{equation}\label{eqthm1}
\lim_{t\to\infty}\widehat{\mathbb Q}_t = \frac{\int_{\R^3} \d x \, \, \psi_0(x) \delta_{\mu_x}}{\int_{\R^3} \d x \,\,\psi_0(x)},
\end{equation}
weakly as probability measures on $\Mcal_1(\R^3)$.
\end{theorem}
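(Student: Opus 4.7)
The plan is to combine a concentration step with an identification of a random ``center'' of $L_t$. By the tube property \eqref{tubeweak} of \cite{MV14}, under $\widehat\P_t$ the measure $L_t$ is already concentrated in any weak neighborhood of $\mathfrak m$; however, since the Coulomb Hamiltonian $H$ is discontinuous in the weak topology, I would first upgrade this to concentration in the uniform strong metric of \cite{KM15}, for which $H$ is continuous. By the shift-uniqueness statement \eqref{shiftunique}, one can then define, on a neighborhood of $\mathfrak m$ in this stronger topology, a measurable center $c\colon L_t \mapsto X_t \in \R^3$ such that $L_t$ is close to $\mu_{X_t}$ with $\widehat\P_t$-probability tending to $1$. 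This reduces Theorem \ref{thm1} to showing that the laws of $X_t$ under $\widehat\P_t$ converge weakly to the probability measure on $\R^3$ with density $\psi_0(x)/\int_{\R^3}\psi_0(y)\,\d y$.

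The heart of the proof is the identification of this center density, for which I would adapt the partial path exchange of \cite{BS97}. The underlying heuristic is that, conditioned on $L_t\approx\mu_x$, the Boltzmann weight $\exp\{tH(L_t)\}\approx\exp\{tH(\mu_0)\}$ is $x$-independent by shift invariance, while the Donsker--Varadhan probability of $\{L_t\approx\mu_x\}$ starting from $W_0=0$ carries a subexponential prefactor $\psi_x(0)\int_{\R^3}\psi_x(y)\,\d y=\psi_0(x)\int_{\R^3}\psi_0(y)\,\d y$, using radial symmetry of $\psi_0$. This prefactor arises from a ground-state / Feynman--Kac spectral decomposition for the Schr\"odinger operator $-\tfrac12\Delta$ with the attractive Coulomb potential generated by density $\psi_x^2$, whose ground state is $\psi_x=\psi_0(\cdot-x)$ and whose principal eigenvalue realizes $\rho$. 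Dividing by the total partition function yields the density $\psi_0$. To turn this into a proof, I would split $[0,t]$ into a short burn-in $[0,\tau]$ and a bulk $[\tau,t]$, use the Markov property together with the tube property on the bulk to argue that the path essentially behaves as the $h$-transform with $h=\psi_{X_t}$, and apply the partial path exchange on the burn-in segment, comparing paths anchored at $0$ with suitably shifted and re-spliced paths, in order to extract the $\psi_0(X_t)$-factor.

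The main obstacle is precisely this last step in the singular, non-compact setting. The torus-folding/periodization device of \cite{BS97} fails here: $1/|x-y|$ is unbounded, and no standard compactification keeps $H$ continuous while preserving shift invariance. One must therefore run the exchange inside the nonstandard compactification $\widetilde{\mathcal X}$ of \cite{MV14,KM15}, control the short-range singularity of $H$ along the seam where path segments are spliced and re-centered, and obtain all estimates uniformly in the shift $X_t$ as it ranges over the non-compact orbit $\mathfrak m$. Beyond the exponential Donsker--Varadhan rate captured in \cite{MV14}, the new quantity that must be extracted is precisely the sub-exponential $\psi_0$-prefactor, and this is where the principal technical work of the argument will lie.
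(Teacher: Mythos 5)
Your plan follows essentially the same route as the paper: tube property for concentration near $\mathfrak m$, a measurable center map (the paper's lexicographically-smallest-minimizer $Y(\mu)$ plays the r\^ole of your $c$), reduction of Theorem~\ref{thm1} to convergence of the law of the center to the density $\psi_0/\int\psi_0$, the burn-in/bulk split at some $t_0$, and extraction of the $\psi_0$-prefactor via the Cameron--Martin--Girsanov $h$-transform to the diffusion with generator $\tfrac12\Delta+(\nabla\psi_x/\psi_x)\cdot\nabla$ combined with the ergodic theorem and the Euler--Lagrange equation for $\psi_0$. Two corrections to your account of the technical route, however. First, the paper does \emph{not} run the path exchange inside the compactification $\widetilde{\mathcal X}$: that space is used only as a black box to deliver Theorems~\ref{thm-tubeweak} and \ref{tubeunifmetric} and Proposition~\ref{expdecaysupnorm}, after which all estimates --- the heat-kernel comparison $p_{t_0}(x,y)\geq p_{t_0}(y,0)$ for $|x|$ large, the splitting of $tH(L_t)$ into diagonal and cross terms, Girsanov, and ergodicity --- are carried out in ordinary Wiener space with explicit bounds. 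Second, and more consequentially, the paper isolates tightness of $L_t$ under $\widehat\P_t$ as a logically prior result (Theorems~\ref{thm2} and~\ref{thm3}), established through the core path-exchange estimate of Theorem~\ref{thm4}; your requirement of estimates ``uniform in $X_t$ over the non-compact orbit'' gestures in the same direction, but leaving tightness implicit hides the step where the genuine work happens: without it one cannot control the normalizing constant $Z_t$ in both the upper and lower bounds of the identification argument, and the ratio comparison cannot be closed.
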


In words, the distribution of the occupation measures under $\widehat\P_t$ converges to a random spatial shift of the maximizer $\psi_0^2$, and the distribution of this shift is $\psi_0(x)\,\d x$, properly normalized. The proof of Theorem~\ref{thm1} can be found in Section \ref{sectionthm1proof}. For a heuristic sketch, we refer to Section \ref{heuristicthm2}. 
We also remark that, following exactly the same line of arguments appearing in the proof of Theorem \ref{thm1}, one can derive the asymptotic behavior
of the distribution $\widehat\P_t \circ W_t^{-1}$ of the end point of the path $W_t$ under $\widehat\P_t$. The limiting distribution is equal to $(\psi_0\star\psi_0)(x)\,\d x\big/\int_{\R^3}\d y \, (\psi_0\star\psi_0)(y)$.

{\bf{Discussion on the construction of the Pekar process.}} Let us now briefly comment on the limiting behavior of $\widehat\P_t$ itself,
which drops out from the main steps for the proof of Theorem \ref{thm1}.
This limiting assertion is directly related to the interpretation of the aforementioned Pekar process, 
Let us first remark on the heuristic definition of the Pekar process set forth by Spohn in \cite{Sp87}.
For any $\mu\in\Mcal_1(\R^3)$, let
$$
\big(\Lambda\mu\big)(x)= \bigg(\mu\star \frac 1{|\cdot|}\bigg)(x)= \int_{\R^3} \frac{\mu(\d y)}{|x-y|}
$$
be the smooth {\em Coulomb functional} of $\mu$. Then via the Feynman-Kac formula corresponding to the semigroup of $\frac 12 \Delta+ \Lambda\psi_0^2$, one can construct the measures
\begin{equation}\label{Pekarmeasure}
\frac 1 {Z_t^{\ssup{\psi_0}}} \exp\bigg\{\int_0^t \d s \,  (\Lambda\psi_0^2)(W_s) \bigg\}\,\d\P.
\end{equation}
These probability measures then should converge, as $t\to\infty$, towards a measure which governs the law of a stationary diffusion process $(X_s^{\mathrm{\ssup{Pek}}})_{s\in[0,\infty)}$,
 driven by the stochastic differential equation (SDE)
$$
\d X_t^ {\mathrm{\ssup{Pek}}}= \d W_t+ \bigg(\frac{\nabla\psi_0}{\psi_0}\bigg)(W_t) \, \d t.
$$
Since $\psi_0$ is centered, the drift in the above SDE points towards the origin and suppresses large fluctuations of the diffusion. Furthermore, the diffusion process is ergodic with invariant measure $\psi_0^2$. 

In the light of the above  discussion, our main result has an interesting consequence on a rigorous level: the proof of Theorem \ref{thm1}
reveals what the precise limit of the measures $\widehat\P_t$ itself should be.
Indeed, this is a spatially inhomogeneous mixture of Markovian path measures with generators 
$$
\frac 12 \Delta+ \frac{\nabla\psi_x}{\psi_x}\cdot \nabla,\qquad x\in\R^3,
$$
with the spatial mixture being taken w.r.t.~ the measure $\psi_0(x)\,\d x/\int \d y \, \psi_0(y)$. This assertion actually carries the flavor
of the classical Gibbs conditioning principle. Given the salient features constituting the proof of Theorem \ref{thm1} (see Section 
\ref{heuristicthm2} for a heuristic sketch), a complete proof of this assertion therefore follows a routine method. To avoid repetition, we refrain from 
spelling out the details and content ourselves only with its statement, which 
clearly underlines the rigorous interpretation of the law of the Pekar process as the aforementioned spatial mixture of Markovian path measures and justifies the heuristic discussion in \cite{Sp87}.


\subsection{Earlier results: Tube properties under $\boldsymbol{\widehat\P_t}$.}\label{sec-Earlier}

\noindent We collect here some crucial results from \cite{MV14} and \cite{KM15} (see also \cite{M15}) that will be used in the sequel.

As mentioned before, a fundamental first step is to 
prove that, under $\widehat\P_t$, the occupation measures $L_t$ concentrate with high probability in any neighborhood of $\mathfrak m$.
This has been rigorously justified in Theorem 5.1 in \cite{MV14}:

\begin{theorem}[Tube property under the weak topology]\label{thm-tubeweak}
For any neighborhood $U(\mathfrak m)$ of $\mathfrak m$ in the weak topology in $\Mcal_1(\R^3)$,
$$
\limsup_{t\to\infty}\frac 1t \log \widehat\P_t\{L_t \notin U(\mathfrak m)\}<0.
$$
\end{theorem}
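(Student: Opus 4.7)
The plan is to lift the event $\{L_t\notin U(\mathfrak m)\}$ to the compactification $\widetilde{\mathcal X}$ of translation orbits in $\widetilde\Mcal_1(\R^3)$ constructed in \cite{MV14}, where one has a full large deviation principle (as opposed to the merely weak Donsker--Varadhan LDP on $\Mcal_1(\R^3)$) for the projected occupation measures $\widetilde L_t$, and then to run a standard Laplace-type argument there. Since $U(\mathfrak m)$, $H$ and $I$ are all shift invariant, the event $\{L_t\notin U(\mathfrak m)\}$ corresponds to $\{\widetilde L_t\notin \widetilde U\}$ for an open neighborhood $\widetilde U$ of the \emph{single} point $\widetilde{\mu_0}\in \widetilde{\mathcal X}$, whose complement is closed and hence compact in $\widetilde{\mathcal X}$.

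Next, I would write
\[
\widehat\P_t\{L_t\notin U(\mathfrak m)\}
= \frac{\E\big[\1\{\widetilde L_t\notin \widetilde U\}\,\exp\{t H(L_t)\}\big]}{Z_t},
\]
and treat numerator and denominator separately. For the denominator, $\frac 1t\log Z_t\to \rho$ by \eqref{Pekarfor}. For the numerator, I would invoke the strong upper bound of the LDP for $\widetilde L_t$ together with a Varadhan-type lemma on $\widetilde{\mathcal X}$, yielding
\[
\limsup_{t\to\infty}\frac 1t\log\E\big[\1\{\widetilde L_t\notin \widetilde U\}\,\exp\{t H(L_t)\}\big]
\;\leq\; \sup_{\widetilde\mu\notin \widetilde U}\bigl\{H(\widetilde\mu)-I(\widetilde\mu)\bigr\}.
\]
By Lieb's uniqueness statement \eqref{shiftunique}, the only maximizer of $H-I$ in the quotient is $\widetilde{\mu_0}$; since $\widetilde U^c$ is compact and $H-I$ is upper semicontinuous on it, the right-hand supremum is strictly less than $\rho$. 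Subtracting the two asymptotics then produces the strict negativity of $\limsup\frac 1t\log\widehat\P_t\{L_t\notin U(\mathfrak m)\}$.

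The main technical obstacle is the Varadhan step for the numerator: the Hamiltonian $H$ is unbounded because of the Coulomb singularity, so the classical Varadhan lemma does not directly apply on $\widetilde{\mathcal X}$. One must extend $H$ to a sufficiently continuous functional on the compactification and establish enough uniform integrability of $\exp\{tH(L_t)\}$ on $\{\widetilde L_t\notin \widetilde U\}$ to justify the strong Laplace upper bound. This extension, together with the corresponding strong-metric upgrade of Varadhan's lemma, is precisely what is carried out in \cite{KM15} (see also \cite{M15}), upgrading the compact LDP framework of \cite{MV14} from bounded interactions to the singular Coulomb case. Once these ingredients are in place, the three steps above combine directly into the tube property.
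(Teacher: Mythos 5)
Your proposal correctly reproduces the approach that is actually used. The paper does not prove Theorem \ref{thm-tubeweak} itself but imports it as Theorem~5.1 of \cite{MV14}, and the strategy it sketches in the introduction --- pass to the compactification $\widetilde{\mathcal X}$ of translation orbits, invoke the full LDP for $\widetilde L_t$ there together with a Laplace/Varadhan bound, and use shift-invariance of $H$ and $I$ plus Lieb's uniqueness \eqref{shiftunique} and compactness of $\widetilde U^{\rm c}$ to obtain strict separation from $\rho$ --- is exactly what you describe. One small misattribution: the extension of the Laplace step to the singular Coulomb interaction, which your last paragraph credits to \cite{KM15} (as if \cite{MV14} only covered bounded interactions), is already carried out in \cite{MV14} for the present weak-topology tube property; \cite{KM15} is the subsequent upgrade to the uniform strong metric (Theorem \ref{tubeunifmetric}) and is not a prerequisite for Theorem \ref{thm-tubeweak}.
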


In \cite{KM15}, this tube property in the weak topology has been strengthened in a ``functional form" in the strong topology under the uniform metric. Note that $H(\mu)= \big\langle \mu, \Lambda \mu\big\rangle= \int (\Lambda\mu)(x) \,\mu(\d x)$, where we recall the Coulomb functional $\Lambda(\mu)$ of $\mu$. We remark that the Coulomb energy of the Brownian occupation measure,  
\begin{equation}\label{Lambdadef}
\Lambda_t(x)=\big(\Lambda L_t\big)(x)= \int_{\R^3} \frac{L_t(\d y)}{|x-y|}= \frac 1 t \int_0^t \frac{\d s}{|W_s- x|},
\end{equation}
is almost surely finite in $\R^3$. 

Let us write $\Lambda(\psi^2)(x)=\int \d y\,\frac{\psi^2(y)}{|x-y|}$ for functions $\psi^2$, and recall that $\psi_w^2= \psi_0^2 \star \delta_w$ denotes the shift of the maximizer $\psi_0^2$ of the second variational formula \eqref{rhodef} by $w\in\R^3$. The following theorem was coined as the tube property for $\Lambda_t$ in the uniform metric (see Theorem 1.1, \cite{KM15}).

\begin{theorem}[Tube property under the uniform metric]\label{tubeunifmetric}
For any $\eps>0$, 
\begin{equation}\label{eqtubeunifmetric}
\limsup_{t\to\infty}\frac 1t\log \widehat\P_t\bigg\{\inf_{w\in \R^3} \big\|\Lambda_t- \Lambda \psi_w^2\big\|_\infty > \eps\bigg\} <0.
\end{equation}
\end{theorem}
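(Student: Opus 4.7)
The plan is to upgrade the weak tube property of Theorem~\ref{thm-tubeweak} to the uniform estimate \eqref{eqtubeunifmetric} by a two-scale analysis of the Coulomb convolution. First, by Theorem~\ref{thm-tubeweak}, on the complement of an event of $\widehat\P_t$-probability decaying exponentially in $t$, the measure $L_t$ lies in an arbitrarily small weak neighborhood of $\mathfrak m$, and hence — since \eqref{shiftunique} gives $\mathfrak m=\{\mu_w:w\in\R^3\}$ — within a weak neighborhood of $\mu_w=\psi_w^2\,\d x$ for some random shift $w\in\R^3$. It therefore suffices to show that, on this event, the weak closeness can be upgraded to $\|\Lambda_t-\Lambda\psi_w^2\|_\infty<\eps$ with $\widehat\P_t$-probability tending to one exponentially fast.

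To do this, I would split the Coulomb kernel as $\tfrac{1}{|y|}=K_r(y)+S_r(y)$, with $K_r$ bounded and continuous (agreeing with $1/|y|$ for $|y|\geq 2r$ and smoothly cut off near the origin) and $S_r$ supported in $B_{2r}(0)$. The long-range convolution difference $(K_r\ast L_t)(x)-(K_r\ast\psi_w^2)(x)$ is controlled uniformly in $x$ once $L_t$ is weakly close to $\mu_w$: equicontinuity of the family $\{K_r(x-\cdot):x\in\R^3\}$, together with rapid decay of $\psi_w^2$ and the fact that Theorem~\ref{thm-tubeweak} also localizes the bulk mass of $L_t$ near $w$, reduces sup-norm control to weak convergence against a compact family of bounded continuous test functions. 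The corresponding smooth short-range term $(S_r\ast\psi_w^2)(x)$ is uniformly $O(r^2)$ in $x$ because $\psi_w^2$ is bounded.

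The genuine difficulty, and the step I expect to be the main obstacle, is to show that the singular contribution $(S_r\ast L_t)(x)$ is small uniformly in $x$ with $\widehat\P_t$-probability tending to one, in the quantitative form $\sup_x (S_r\ast L_t)(x)\to 0$ as $r\to 0$. Under the free Wiener measure this is a standard small-ball estimate for three-dimensional Brownian motion, but the self-attractive Gibbs tilt in $\widehat\P_t$ biases paths toward clumping and could in principle amplify short-range concentration. The resolution is to exploit the sharp free-energy balance $\log Z_t\sim t\rho$: any path configuration with an excess of $L_t$-mass inside some small ball $B_{2r}(x_0)$ beyond what the Pekar density $\psi_w^2$ permits would produce, after a suitable mollification yielding a comparison measure of finite $I$, an energy/rate-function combination strictly above $\rho$, which by the variational formula \eqref{rhodef} is penalized exponentially in $t$ under $\widehat\P_t$. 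Quantitatively I would implement this through a heat-kernel mollification of $L_t$ combined with a Hardy/Sobolev-type trade-off between localized Coulomb energy and the Donsker--Varadhan rate $I$. Collecting the long-range and short-range estimates and choosing first $r$ small and then $t$ large yields \eqref{eqtubeunifmetric}.
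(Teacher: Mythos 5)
This theorem is not proved in the present paper; it is quoted from the companion work (Theorem~1.1 of \cite{KM15}), whose proof proceeds by establishing a priori uniform regularity for $\Lambda_t$ under the \emph{free} Wiener measure --- exponential moments of a H\"older seminorm of $x\mapsto\Lambda_1(x)$ obtained from a Garsia--Rodemich--Rumsey argument, as quoted here in \eqref{prooflemma3est2.56}--\eqref{prooflemma3est2.59}, together with Corollary~1.4 of \cite{KM15}, whose exponential rate can be made arbitrarily large --- and then transferring these estimates to $\widehat\P_t$ by combining them with the weak tube property and the compactified LDP of \cite{MV14}. Your high-level plan (start from Theorem~\ref{thm-tubeweak}, split the Coulomb kernel at scale $r$, control the long-range part by weak closeness plus equicontinuity, and isolate the short-range singular part) is aligned with this structure.

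The genuine gap is in your treatment of the short-range singular piece $S_r\ast L_t$. You propose to bound $\widehat\P_t\{\sup_x(S_r\ast L_t)(x)>\eps\}$ by a variational free-energy argument: any $\mu$ with ``excess mass'' in a small ball should, after mollification, have $H(\mu)-I(\mu)<\rho$, and so be exponentially penalized by \eqref{rhodef}. This does not go through as stated, for two reasons. First, the event $\{(S_r\ast L_t)(x_0)>\eps\}$ is \emph{not} governed by excess mass: $\int_{B_{2r}(x_0)}\frac{L_t(\d y)}{|x_0-y|}$ can be large precisely because the kernel amplifies arbitrarily small masses that sit very close to $x_0$, while the total mass of $L_t$ in $B_{2r}(x_0)$ stays small. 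Weak closeness of $L_t$ to $\mu_w$, and likewise any mollified comparison measure, is blind to exactly this near-singular concentration; a Hardy/Sobolev comparison of energies for the \emph{mollified} measure cannot see it either. Second, the variational upper bound $\limsup\frac1t\log\E[\e^{tH(L_t)}\1_A]\leq\sup_{\mu\in\bar A}\{H(\mu)-I(\mu)\}$ is only available for events $A$ that are closed in a topology in which the LDP holds, and $\mu\mapsto(S_r\ast\mu)(x_0)$ is not weakly continuous; so the event you want to penalize is outside the reach of the weak LDP unless you have already established the a priori sup-norm regularity of $\Lambda_t$ --- which is the content of the GRR estimates in \cite{KM15}, and is the piece your argument implicitly presupposes. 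To repair the step, you would need either those regularity/exponential-moment bounds for $\Lambda_t$ under $\P$ (after which a Cauchy--Schwarz transfer to $\widehat\P_t$, as in the proof of Lemma~\ref{step3lemma} and in Proposition~\ref{expdecaysupnorm}, closes the argument), or an alternative a priori control of the short-range Coulomb functional that is uniform in the spatial variable and strong enough to be exponentially negligible against $\e^{t\rho}$.
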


As a consequence of Theorem \ref{tubeunifmetric}, the Hamiltonian $H(L_t)=\langle L_t,\Lambda L_t\rangle$ converges in distribution towards the common Coulomb energy of any member of $\mathfrak m$:

\begin{cor}\label{convergeHLt}
Under $\widehat\P_t$, the distributions of $H(L_t)$ converge weakly to the Dirac measure at
$$
H(\psi_0^2)= \int\int_{\R^3\times\R^3} \frac{\psi_0^2(x)\psi_0^2(y)}{|x-y|}\, \d x\d y.
$$
\end{cor}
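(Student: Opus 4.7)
The plan is to deduce the corollary directly from Theorem~\ref{tubeunifmetric} via a short functional-analytic bound that controls $H(L_t)$ by the uniform distance between $\Lambda_t$ and the family $\{\Lambda\psi_w^2\}_{w\in\R^3}$. Since the limit is a Dirac mass at the constant $H(\psi_0^2)$, it suffices to show that $H(L_t) \to H(\psi_0^2)$ in $\widehat\P_t$-probability.

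The key deterministic inequality I would establish is
\begin{equation*}
|H(L_t) - H(\psi_0^2)| \leq 2 \inf_{w\in\R^3} \|\Lambda_t - \Lambda\psi_w^2\|_\infty.
\end{equation*}
To get this, fix any $w\in\R^3$ and note that by shift invariance $H(\psi_w^2)=H(\psi_0^2)$. By Fubini/Tonelli applied to the nonnegative symmetric kernel $1/|x-y|$, one has $\int \Lambda\psi_w^2 \, dL_t = \int \Lambda_t \, \psi_w^2\, dx$. Adding and subtracting this common quantity in $H(L_t) - H(\psi_w^2) = \int \Lambda_t\, dL_t - \int \Lambda\psi_w^2\,\psi_w^2\, dx$, I can rewrite the difference as
\begin{equation*}
H(L_t) - H(\psi_w^2) = \int (\Lambda_t - \Lambda\psi_w^2)\, dL_t + \int (\Lambda_t - \Lambda\psi_w^2)\, \psi_w^2\, dx.
\end{equation*}
Since $L_t$ and $\psi_w^2\, dx$ are probability measures, both terms are bounded in absolute value by $\|\Lambda_t - \Lambda\psi_w^2\|_\infty$, giving the factor $2$. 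Taking the infimum over $w$ yields the displayed inequality.

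With this inequality in hand, the conclusion is immediate: for any $\eps>0$,
\begin{equation*}
\widehat\P_t\bigl\{|H(L_t) - H(\psi_0^2)| > 2\eps\bigr\} \;\leq\; \widehat\P_t\Bigl\{\inf_{w\in\R^3} \|\Lambda_t - \Lambda\psi_w^2\|_\infty > \eps\Bigr\},
\end{equation*}
and the right-hand side tends to $0$ (in fact exponentially fast) by Theorem~\ref{tubeunifmetric}. This gives convergence of $H(L_t)$ in $\widehat\P_t$-probability to the constant $H(\psi_0^2)$, which is equivalent to weak convergence of the laws to $\delta_{H(\psi_0^2)}$.

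I do not anticipate a genuine obstacle here: once Theorem~\ref{tubeunifmetric} is granted, the only subtlety is the Fubini step, which is justified on the almost-sure event that $\Lambda_t$ is bounded—and indeed, on the event $\{\inf_w\|\Lambda_t - \Lambda\psi_w^2\|_\infty \leq \eps\}$ this boundedness follows automatically from the boundedness of $\Lambda\psi_w^2$. The proof is therefore a one-paragraph consequence of the uniform tube property and requires no further large-deviation input.
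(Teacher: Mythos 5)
Your proof is correct, and it fills in exactly the argument the paper leaves implicit when it labels the statement a ``consequence of Theorem~\ref{tubeunifmetric}'' without proof: the deterministic bound $|H(L_t)-H(\psi_0^2)|\le 2\inf_w\|\Lambda_t-\Lambda\psi_w^2\|_\infty$, obtained via the symmetrization/Fubini identity and the shift invariance of $H$, immediately transfers the exponential tube estimate to convergence of $H(L_t)$ in $\widehat\P_t$-probability, hence weakly to $\delta_{H(\psi_0^2)}$.
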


Finally, we state an interesting fact concerning the exponential decay of the probability of deviations of the uniform norm of $\Lambda_t$ from zero.
This fact followed from the exponential regularity estimates for $\Lambda_t$ in the uniform norm derived in \cite{KM15}, see Theorem 1.3 and Corollary 1.4 in \cite{KM15}.
The following proposition will be used often in the sequel. 

\begin{prop}\label{expdecaysupnorm}
 For any $a>0$,
 $$
 \limsup_{t\to\infty} \frac 1t \log \P\big\{\|\Lambda_t\|_\infty>a\big\}<0.
 $$
 \end{prop}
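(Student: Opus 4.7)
The plan is to deduce the proposition from the exponential regularity estimates for $\Lambda_t$ in the uniform norm established in Theorem~1.3 and Corollary~1.4 of \cite{KM15}, together with a short Markov inequality argument. In essence, those results supply a bound of the form $\E[\exp(\alpha t\,\|\Lambda_t\|_\infty)] \le C\,e^{c(\alpha)t}$ valid for every sufficiently small $\alpha > 0$, with $c(\alpha)/\alpha \to 0$ as $\alpha \downarrow 0$. Given such a bound, Chebyshev's inequality at level $at$ immediately yields
\[
\P\big(\|\Lambda_t\|_\infty > a\big) \le C\,\exp\!\big(-(\alpha a - c(\alpha))t\big),
\]
and for any fixed $a>0$ one can choose $\alpha>0$ small enough that $\alpha a - c(\alpha) > 0$, producing the required exponential decay.

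To build up the uniform-norm exponential moment bound (which is the heart of the matter), I would first establish its pointwise analogue via Khasminskii's lemma applied to the Kato-class potential $y \mapsto 1/|y|$ on $\R^3$: this yields $\sup_x \E[\exp(\alpha \int_0^t ds/|W_s - x|)] \le C\,e^{c(\alpha)t}$ uniformly in $x$, with $c(\alpha) \downarrow 0$ as $\alpha \downarrow 0$. Next, I would reduce the supremum over $x \in \R^3$ to the supremum over a ball of radius $Rt$, using the Gaussian tail bound $\P(\sup_{s\le t}|W_s| > Rt) \le Ce^{-cR^2 t}$ to confine the Brownian range, after which $\Lambda_t(x) \le 1/(Rt) < a$ off the large ball whenever $t$ is large. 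Then I would cover this ball by $O(t^3)$ balls of a small radius $\delta$, and on each small ball $B(x_i, \delta)$ use the decomposition
\[
\sup_{y \in B(x_i, \delta)} \Lambda_t(y) \;\le\; 2\,\Lambda_t(x_i) \;+\; \sup_{y \in B(x_i, \delta)}\int_{|z - x_i| \le 2\delta} \frac{L_t(dz)}{|y - z|},
\]
treating the far field (for which $|y-z|\ge|z-x_i|/2$) via the pointwise Khasminskii bound and the near field via a separate local exponential moment estimate. A union bound, whose polynomial prefactor does not spoil the exponential rate, combines the pieces.

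The main obstacle is the local, near-field term: because of the singularity of $1/|x|$, one cannot simply interchange the supremum over $y \in B(x_i, \delta)$ with the time integral, since $\sup_{y \in B(x_i,\delta)} 1/|W_s - y|$ is infinite whenever $W_s$ visits $B(x_i,\delta)$. Controlling this quantity uniformly in $x_i$ is precisely what is accomplished by the regularity estimates in \cite{KM15} (via a careful analysis of the Brownian occupation of small balls together with Kac-type moment expansions), and the present proposition emerges as a compact consequence of that analysis.
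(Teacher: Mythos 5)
Your plan --- establish a uniform exponential moment bound $\E[\exp(\alpha t\,\|\Lambda_t\|_\infty)] \le Ce^{c(\alpha)t}$ with $c(\alpha)/\alpha\to0$ as $\alpha\downarrow0$, then conclude by a Chebyshev estimate --- is correct, and it is precisely the kind of input the paper invokes. Be aware, though, that the paper does not actually re-prove Proposition~\ref{expdecaysupnorm} in the present text: it is stated as a direct consequence of Theorem~1.3 and Corollary~1.4 of \cite{KM15}, with no argument reproduced, so your detailed outline is a reconstruction of what lives inside \cite{KM15} rather than an alternative to anything displayed here. For comparison, when the paper does later need a quantitative exponential moment bound of this type (inequality \eqref{prooflemma3est2.55}), it first reduces to the unit-time functional $\|\Lambda_1\|_\infty$ by the Markov property, successive conditioning, and shift-invariance, and then splits $\|\Lambda_1\|_\infty$ into a Garsia--Rodemich--Rumsey modulus-of-continuity term (Lemma~2.2 of \cite{KM15}) plus a lattice supremum $\sup_{x\in\delta\Z^3}\int_0^1\d s/|W_s-x|$ controlled via Portenko's lemma. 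That is the same chaining idea as your far/near decomposition of $\sup_{y\in B(x_i,\delta)}\Lambda_t(y)$, just carried out on a fixed time window rather than on $[0,t]$ with an $O(t^3)$ cover. Your far-field bound $\le 2\Lambda_t(x_i)$ is correct, confining the centers to a ball of radius $O(t)$ via Gaussian tails is fine and affects only the polynomial prefactor, and you correctly isolate the near-field estimate, uniform in the centers $x_i$, as the genuine technical content coming from \cite{KM15} --- which is exactly where the paper also defers.
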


\subsection{Tightness of the distributions of $\boldsymbol{L_t}$ under $\boldsymbol{\widehat\P_t}$.}

\noindent We now turn to the second main ingredient that constitutes the proof of Theorem \ref{thm1}.

We emphasize that the localization properties stated in Section~\ref{sec-Earlier} still allows the measures $L_t$  to {\it a priori} float around freely in the infinite tubular neighborhood of $\mathfrak m$.
The next main step is to justify that this can happen only with small $\widehat\P_t$ probability, and this is our second main result.


\begin{theorem}[Tightness]\label{thm2}
For any $\eta>0$, there exists $k(\eta)>0$ such that, for any neighborhood $U$ of the set $\{\mu_x\colon x\in\R^3, |x|\leq k(\eta)\}$, 
$$
\limsup_{t\to\infty}\widehat\P_t\{L_t\notin U\}< \eta.
$$
\end{theorem}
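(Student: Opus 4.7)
\medskip
\noindent\textbf{Proof plan.} The strategy is to reduce tightness of $L_t$ under $\widehat\P_t$ to tightness of a random shift $w_t\in\R^3$, and then establish the latter by a partial path exchange argument inspired by \cite{BS97}. For the reduction, Theorem~\ref{tubeunifmetric} yields, on an event whose $\widehat\P_t$-probability tends to $1$, a measurable random element $w_t=w_t(\omega)$ satisfying $\|\Lambda_t-\Lambda\psi_{w_t}^2\|_\infty<\eps$. Since $\Lambda\psi_w^2$ vanishes uniformly as $|w|\to\infty$, and since uniform proximity of Coulomb functionals entails weak proximity of the underlying probability measures near the Pekar maximizer, it suffices to show that the laws of $w_t$ under $\widehat\P_t$ are tight in $\R^3$.

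To bound $\widehat\P_t\{|w_t|>k\}$, I exploit the shift-invariance of $H$ together with the Markov property. Translating the Brownian path by $-w$ sends the center $w_t$ to $w_t-w$, leaves $H(L_t)$ invariant, and replaces the initial condition $W_0=0$ by $W_0=-w$. Conditioning on $W_\tau=y$ at a short time $\tau$ (to be optimized later), the Markov property produces a convolution identity of the form
\begin{equation*}
\E_{-w}\bigl[\1\{w_t\approx 0\}\,e^{tH(L_t)}\bigr]=\int_{\R^3} p_\tau(y+w)\,F_{t-\tau}(y)\,\d y,
\end{equation*}
where $F_{t-\tau}(y)$ is the analogous expectation over the residual path starting at $y$. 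Comparing this with the corresponding expression for $w=0$ picks up the Gaussian ratio $p_\tau(y+w)/p_\tau(y)$, which produces the desired decay in $|w|$ provided the profile $y\mapsto F_{t-\tau}(y)$ is suitably concentrated around the origin.

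The main obstacle is to control $F_{t-\tau}(y)$ as $|y|\to\infty$, so that the exchange does not collapse when the ``pivot'' $y$ drifts far from the origin. This requires an a~priori estimate, not relying on the tightness we are trying to prove, on the contribution of paths starting at $y$ whose terminal center lies near $0$. Such an estimate follows from combining Proposition~\ref{expdecaysupnorm} (exponential smallness of $\|\Lambda_t\|_\infty$ under $\P$) to control the transit contribution to $H(L_t)$, Corollary~\ref{convergeHLt} to replace $H(L_t)$ by its deterministic limit $H(\psi_0^2)$ on the high-probability event, and the uniform tube property of Theorem~\ref{tubeunifmetric} to localize the occupation measure at the Coulomb level. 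With a careful optimization over $\tau$ and integration over $|w|>k$, the resulting tail decay in $|w|$ yields $\limsup_{t\to\infty}\widehat\P_t\{|w_t|>k\}<\eta$ for $k$ sufficiently large, which is the tightness claim.
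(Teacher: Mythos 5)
Your general strategy — reduce tightness of $L_t$ under $\widehat\P_t$ to tightness of a random centre $w_t$ via Theorem~\ref{tubeunifmetric}, then compare the weight at a far centre $w$ with the weight at $0$ by a partial path exchange and a Gaussian kernel ratio — is the right idea and matches the heuristic of Section~\ref{heuristicthm2}. But the implementation has a genuine gap exactly where you flag it: the a~priori bound on $F_{t-\tau}(y)$. By shift-invariance, $F_{t-\tau}(y)$ is (up to transit cross terms) $\E_0\bigl[\1\{w_{t-\tau}\approx -y\}\,e^{(t-\tau)H(L_{t-\tau})}\bigr]$, i.e.\ the very same kind of numerator the theorem is about, with $-y$ playing the role of the far centre. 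None of the cited ingredients break this circularity: Proposition~\ref{expdecaysupnorm} bounds $\|\Lambda_t\|_\infty$, Corollary~\ref{convergeHLt} pins the scalar $H(L_t)$, and Theorem~\ref{tubeunifmetric} says $\Lambda_t$ is near \emph{some} $\Lambda\psi_w^2$ — but none of them localize \emph{which} $w$ is selected relative to the path's starting point. That localization \emph{is} the tightness statement, so your proposed a~priori estimate on $F_{t-\tau}(y)$ would already be the theorem.

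The paper resolves this with two structural changes. First, the split time is not a short deterministic $\tau$ but (essentially) the hitting time $t_0\approx\tau_{r_\eps}(x)$ of a neighbourhood of the far centre $x$, which is large (at least $u$) on the event of interest; only a large $t_0$ lets the ergodic theorem kick in. Second, one conditions on the \emph{tail} $\sigma$-field $\mathcal F_{t_0,t}$ rather than on the head: on the relevant event $L_{t_0,t}\approx\mu_x$ is already pinned down, and the conditional expectation of the $[0,t_0]$ prefix is computed exactly via a Girsanov change of measure to the diffusion with generator $\tfrac12\Delta+(\nabla\psi_x/\psi_x)\cdot\nabla$ and the ergodic theorem (Lemma~\ref{lemma-claim}), giving a factor $\e^{\lambda t_0/2}\psi_0(0)\psi_0(\cdot-x)$. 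The denominator $Z_t$ is then bounded from below by the same tail-conditional expectation started at $0$, using the elementary Gaussian inequality $p_{t_0}(x,y)\geq p_{t_0}(y,0)$, valid since $y\in B_1(W_\tau)$ is close to $x$ while $|x|$ is large. The large tail factor cancels identically, leaving $\e^{C\sqrt\eps t_0-c_9 t_0}$, which is then integrated over $t_0\geq u$. The Girsanov--ergodic computation and the conditioning on the tail rather than the head are the two ingredients your plan lacks, and they are precisely what make any a~priori spatial bound on $F_{t-\tau}(y)$ unnecessary.
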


We will actually prove a slightly weaker version of Theorem \ref{thm2}. Combined with Theorem \ref{tubeweak}, this result will clearly imply Theorem \ref{thm2}.
 \begin{theorem}\label{thm3}
 There exists $\eps_0>0$ such that for all $\eps\leq \eps_0$ and for all $\eta>0$, there exist $k{(\eps,\eta)}>0$ and $u(\eps,\eta)>0$ so that
 $$
 \widehat\P_t\bigg\{ \bigcup_{|x|\geq k{(\eps,\eta)}} \big\{L_t\in U_\eps(\mu_x)\big\}\bigg\} <\eta,
 $$
 for all $t\geq u(\eps,\eta)$.
\end{theorem}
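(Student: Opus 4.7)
The plan is to reduce the statement to a countable sum over a discrete net of shifts $x$ and then use that the Pekar ground state $\psi_0$, being exponentially decaying at infinity, lies in $L^1(\R^3)$, so that the tail $\int_{|x|\geq k}\psi_0(x)\,\d x$ can be made smaller than $\eta$ by choosing $k=k(\eps,\eta)$ large. Concretely, I would cover $\{x\in\R^3:|x|\geq k\}$ by a lattice $\Gamma_k\subset\delta\Z^3$ with $\delta$ small enough, depending on $\eps$, so that each $U_\eps(\mu_x)$ with $|x|\geq k$ is contained in $U_{2\eps}(\mu_{x'})$ for the closest lattice point $x'\in\Gamma_k$; a union bound then reduces matters to showing $\sum_{x\in\Gamma_k}\widehat{\P}_t(L_t\in U_{2\eps}(\mu_x))<\eta$ uniformly in $t\geq u(\eps,\eta)$.

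The crux is then a per-term bound of the form $\widehat{\P}_t(L_t\in U_{2\eps}(\mu_x))\leq C\,\psi_0(x)$ uniformly in large $t$. I would derive this from the Feynman--Kac semigroup $T_t^{(x)}f(y)=\E_y[f(W_t)\exp(\int_0^t V_x(W_s)\,\d s)]$ with the shifted Pekar potential $V_x=\Lambda\psi_x^2$. The Euler--Lagrange equation for the variational problem \eqref{rhodef} makes $\psi_x$ the unique positive ground state of $-\tfrac12\Delta-V_x$ with top eigenvalue $\rho$, so standard spectral theory yields $T_t^{(x)}1(0)\sim e^{t\rho}\psi_x(0)\int\psi_x=e^{t\rho}\psi_0(x)\int\psi_0$, using the radial symmetry $\psi_x(0)=\psi_0(-x)=\psi_0(x)$. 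Combined with the Donsker--Varadhan lower bound $Z_t\gtrsim e^{t\rho}$, the correct exponential decay in $|x|$ appears, with the integrable function $\psi_0$ as the limiting prefactor---matching the expectation set by Theorem~\ref{thm1}.

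The main obstacle is converting the heuristic $\E[\1_{L_t\in U_{2\eps}(\mu_x)}e^{tH(L_t)}]\lesssim T_t^{(x)}1(0)$ into an inequality without a parasitic factor of the form $e^{O(\eps t)}$. Indeed, the naive route via Theorem~\ref{tubeunifmetric} yields only the pointwise estimate $|tH(L_t)-\int_0^t V_x(W_s)\,\d s|\leq 2\eps t$ on $\{L_t\in U_{2\eps}(\mu_x)\}$, and such an exponential factor is fatal after summation over $|x|\to\infty$ (no matter how small $\eps$ is). Removing this loss is exactly the role of the partial path exchange idea from \cite{BS97}, adapted to the continuous, singular Coulomb setting. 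My approach would be to decompose the path at an intermediate, $t$-independent time $s_0=s_0(\eps)$ using the Markov property, use the convexity of the quadratic form $H$ to write $tH(L_t)\leq s_0H(L^{(1)})+(t-s_0)H(L^{(2)})$ with $L^{(1)},L^{(2)}$ the normalized occupation measures on $[0,s_0]$ and $[s_0,t]$, absorb the initial factor $e^{s_0H(L^{(1)})}$ into the expectation $Z_{s_0}$ via the exponential integrability of $\|\Lambda_{s_0}\|_\infty$ provided by Proposition~\ref{expdecaysupnorm}, and on the high-probability event $\{|W_{s_0}|\leq M\}$ bound the tail factor by $T_{t-s_0}^{(x)}1(W_{s_0})\sim e^{(t-s_0)\rho}\psi_0(W_{s_0}-x)\int\psi_0\leq C(M)e^{(t-s_0)\rho}\psi_0(x)$ (the last inequality using the uniform decay of $\psi_0$ for $|W_{s_0}|\leq M$). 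The technical heart of the argument is to show that the cross term $\langle L^{(1)},\Lambda L^{(2)}\rangle$ in the expansion of $H$---the only remaining interaction between the two portions of the path---gives only a controllable contribution (which in fact vanishes for $|x|\to\infty$), so that the decomposition does not reintroduce an exponential-in-$t$ loss and the bound $\widehat{\P}_t(L_t\in U_{2\eps}(\mu_x))\leq C\,\psi_0(x)$ indeed holds uniformly in $t$.
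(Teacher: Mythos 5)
Your proposal correctly isolates the key obstacle---that Theorem~\ref{tubeunifmetric} alone gives only $|tH(L_t)-\int_0^t V_x(W_s)\,\d s|\leq C\sqrt\eps\, t$, whose contribution $\e^{O(\sqrt\eps t)}$ is fatal---and you rightly invoke the partial path exchange idea. However, your specific decomposition at a fixed, $t$-independent time $s_0(\eps)$ is genuinely different from the paper's, and this difference is not cosmetic. The paper decomposes at (essentially) the hitting time $\tau_{r_\eps}(x)$, which has two crucial features absent from your plan. First, $\tau$ grows with $|x|$ (via Lemma~\ref{tausum}: $\tau_{r_\eps}(x)\gtrsim\sqrt{|x|}$ with high $\widehat\P_t$-probability), so that the exponential gain $\e^{-c_9t_0}$ from the path exchange, competing against the parasitic $\e^{C\sqrt\eps t_0}$ on the \emph{initial} segment only, is as large as needed uniformly in $t$. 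Second, and more importantly, the Girsanov/ergodic comparison in \eqref{prooflemma3est10.5} is applied with both endpoints within distance $O(r_\eps)$ of $x$, since the path starts (in the exchanged picture) from $x$ and ends in $B_1(W_\tau)$; the spectral asymptotics are thus needed only at \emph{bounded} distance from $x$. Your semigroup estimate $T^{\ssup x}_{t-s_0}1(W_{s_0})\sim \e^{(t-s_0)\rho}\psi_0(W_{s_0}-x)\int\psi_0$ with $W_{s_0}$ near the origin requires the ground state asymptotics to hold uniformly in the distance $|W_{s_0}-x|\approx|x|\to\infty$ for $t$ large but independent of $x$; this uniformity is exactly what fails (the time needed for the ground state to dominate grows with $|y-x|$), and the paper's choice of decomposition point is designed to sidestep precisely this issue.

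There are two more concrete defects. (i) Your Feynman--Kac semigroup uses $V_x=\Lambda\psi_x^2$, but the Euler--Lagrange equation \eqref{EL} makes $\psi_x$ the ground state of $\tfrac12\Delta+2\Lambda\psi_x^2$ with eigenvalue $\lambda/2$, where $\lambda=4H(\psi_0^2)-\|\nabla\psi_0\|_2^2>2\rho$; the operator $\tfrac12\Delta+V_x$ has neither $\psi_x$ as ground state nor $\rho$ as top eigenvalue. To make your quadratic-form route work one would instead use $H(\mu)\leq 2\langle\mu,\Lambda\mu_x\rangle-H(\mu_x)$ (positive definiteness of the Coulomb form on zero-mass differences), leading to the semigroup with potential $2V_x$ and the correct cancellation $\lambda/2-H(\psi_0^2)=\rho$. (ii) You compare the numerator against $Z_t\gtrsim\e^{t\rho}$, but this asymptotic lower bound has an uncontrolled subexponential prefactor; the paper avoids this by lower-bounding $Z_t$ directly against the same restricted expectation after the path exchange, picking up the explicit gain $\e^{c_9 t_0}$. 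Finally, you defer ``the technical heart'' (controlling the cross term after the exchange) without a concrete argument; in the paper this is the content of \eqref{prooflemma3est7}--\eqref{prooflemma3est8}, which uses the fact that the path has not yet entered $B_{r_\eps}(x)$ before time $\tau$, so $\langle\Lambda\psi_x^2,L_{t_0-\theta}\rangle\leq C\eps$---a bound that again hinges on decomposing at the hitting time rather than a fixed $s_0$.

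Note also a structural difference: your plan reduces Theorem~\ref{thm3} to a countable union bound over a lattice net, requiring the summable per-term estimate $\widehat\P_t(L_t\in U_{2\eps}(\mu_x))\lesssim\psi_0(x)$, which is considerably stronger than what the paper's Theorem~\ref{thm4} gives (roughly $\e^{-\sqrt{|x|}/C}$). The paper never sums over $x$; instead it observes via Lemma~\ref{tausum} that, uniformly over all $|x|\geq k$, the hitting time $\tau_{r_\eps}(x)$ exceeds $\sqrt{k-r_\eps}$ outside an event of small $\widehat\P_t$-probability, and then applies Theorem~\ref{thm4} once with $u=u(\eps,\eta)$. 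Your bound, if it could be established, would be cleaner and in line with the limit in Theorem~\ref{thm1}, but as written the route to it has the gaps described above.
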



Section \ref{sectionthm2proof} is devoted to the proof of Theorem \ref{thm3}. 
The final step to prove Theorem \ref{thm1} is then to justify the weak convergence of the measures $\widehat{\mathbb Q}_t$ to the 
limiting measure appearing on \eqref{eqthm1}. This follows from Theorem \ref{tubeweak} and Theorem \ref{thm2}. Section \ref{sectionthm1proof} constitutes the proof of Theorem \ref{thm1}. 
Let us now heuristically sketch the central idea behind the proof of Theorem \ref{thm3}, which contains the heart of the argument for Theorem \ref{thm1}.

\subsection{Heuristic proof of Theorem \ref{thm3} and Theorem \ref{thm1}: Partial path exchange.}\label{heuristicthm2}

\noindent We now heuristically justify that, under $\widehat \P_t$, $L_t$ can not build up its mass over a long time close to some $\mu_x$ if $x$ is far away from the starting point
of the path. To estimate this event, we study the ratio
\begin{equation}\label{ratio}
\frac {\widehat \P_t(L_t\approx \mu_x)}{\widehat \P_t(L_t\approx \mu_0)}= \frac {\E\big\{\exp\{tH(L_t)\}\,\1_{\{L_t\approx \mu_x\}}\big\}}{\E\big\{\exp\big\{tH(L_t)\big\}\,\1_{\{L_t\approx \mu_0\}}\big\}},
\end{equation}
and show that this is small as $t\to\infty$. For this, we emulate an approach similar to \cite{BS97}, which resembles the Peierls argument in the Ising model.

It is conceivable that for the event $\{L_t\approx \mu_x\}$ to happen, the path, starting from the origin, reaches a neighborhood of $x$ relatively quickly, say by time $t_0$, and concentrates in that neighborhood for the remaining time time $t-t_0$. This leads to a splitting of the occupation measure 
$$
L_t= \frac {t_0} t L_{t_0}+ \frac {t-t_0}t L_{t_0,t}
$$ 
before and after time $t_0$. 
Although we will choose $t_0\ll t$, note that the time $t_0$ should also get large as $|x|$ gets large. 
Note that the above splitting of $L_t$ also leads to the splitting of the Hamiltonian:
\begin{equation}\label{eqheur0}
t H(L_t)= \frac{t_0^2}{t}\, H(L_{t_0})+2 \frac{t_0(t-t_0)}{t}\big\langle L_{t_0} , \Lambda_{t_0,t}\big\rangle+ \frac{(t-t_0)^2}{t}H(L_{t_0,t}),
\end{equation}
where $\Lambda_{t_0,t}(x)= \int_{\R^3} \frac{L_{t_0,t}(\d y)}{|x-y|}$ is the Coulomb functional of $L_{t_0,t}$. Since $t_0\ll t$, the first term on the right-hand side should be negligible.


Let us turn to the second term, which makes a difference in the ratio \eqref{ratio} with the occurrences of 
$L_t\approx\mu_x$, respectively $L_t\approx \mu_0$, and makes the ratio \eqref{ratio} small as follows. 
Note that with high probability, on $\{L_t\approx \mu_x\}$, we expect that $L_{t_0,t}\approx \mu_x$ in the weak topology. Hence, by Theorem~\ref{tubeunifmetric}, 
$\Lambda_{t_0,t}\approx \Lambda(\mu_x)$ with high $\widehat\P_t$-probability in the uniform strong topology. 
Consequently, the second term (in the numerator in \eqref{ratio}) can essentially be replaced by $2t_0 \big\langle L_{t_0} ,\Lambda(\mu_x)\big\rangle$. But $\Lambda(\mu_x)$, being concentrated around $x$, has only vanishing interaction with $L_{t_0}$ as $|x|$ is large. On the contrary, in the denominator in \eqref{ratio} (i.e., on the event $\{L_{t_0}\approx \mu_0\}$), by the same reasoning
 we have $\Lambda_{t_0,t}\approx \Lambda(\mu_0)$, which has a non-trivial interaction with $L_{t_0}$. Hence, the difference made by the second term appearing in the numerator and the denominator in \eqref{ratio} can be quantified as $-C t_0$ with some $C>0$ that depends only on $\psi_0$, and $t_0$ is large if $|x|$ is large.

The third term on the right hand side of \eqref{eqheur0} involves only the path on the time interval $[t_0,t]$ when it hangs around $x$ (for the numerator) respectively around $0$ (for the denominator). 
To compare these two scenarios, we just shift the path in the numerator (when it is close to $x$) after time $t_0$ by the amount of $-x$. 
The crucial upshot is, due to the shift-invariance of the Hamiltonian $H(L_{t_0,t})$, which is being exploited heavily, this {\it{path exchange argument}} does not cost anything to the main bulk of the path
in the long time interval $[t_0,t]$. If we call the occupation measure of the shifted path $L_t^{\ssup {\mathrm{shift}}}$, the only
feature that essentially distinguishes $H(L_t)$ from $H(L_t^{\ssup {\mathrm{shift}}})$ is that the former has essentially no contribution from the interaction between the path on $[0, t_0]$ with the path on $[t_0, t]$, where the latter has. For our purposes, we need to quantify this interaction, for which we switch from our original measure $\P$ to the ergodic Markov process 
with generator 
$$
\frac1 2 \Delta+ \bigg(\frac{\nabla\psi_0}{\psi_0}\bigg).\nabla
$$  
starting from $0$ with invariant measure $\mu_0$ and density $\psi_0^2$. Then the Girsanov transformation
and the ergodic theorem imply that, for $t_0$ large,
\begin{equation}\label{eqheur}
\E\bigg\{\e^{t_0 H(L_{t_0}\otimes\mu_0)} \1\{W_{t_0}\in \d y\}\bigg\} \approx \e^{\rho t_0} \psi_0(0) \psi_0(y),
\end{equation} 
where $\rho$ is the variational formula 
defined in \eqref{rhodef}. 
Summarizing the contributions in the splitting \eqref{eqheur0}, it turns out that, on the event $\{L_t\approx x\}$,
$$
H(L_t^{\ssup {\mathrm{{shift}}}})\approx H(L_t) + Ct_0.
$$
Substituting this in \eqref{ratio}, we obtain
$$
\E\bigg\{\exp\{tH(L_t)\}\,\1_{\{L_t\approx \mu_x\}}\bigg\} \,{\leq}\, \e^{-Ct_0}\, \E\bigg\{\exp\{tH(L_t)\}\,\1_{\{L_t\approx \mu_0\}}\bigg\}.$$
Recall that $t_0$ is large, since $|x|$ is large. Hence, the ratio \eqref{ratio} gets small
uniformly in large $t$. This implies that under $\widehat\P_t$, $L_t$ must have its main weight close to the starting point. This ends our survey on the proof of the tightness in Theorem~\ref{thm3}.

The additional argument for the proof of Theorem \ref{thm1} 
involves combining \eqref{eqheur} with a similar statement with the r\^{o}les of $x$ and $0$ interchanged.
Combining it with the aforementioned tightness argument then leads to the conclusion
$$
\lim_{t\to\infty} \frac {\widehat \P_t(L_t\approx \mu_x)}{\widehat \P_t(L_t\approx \mu_0)}= \frac{\psi_x(0)}{\psi_0(0)}= \frac{\psi_0(-x)}{\psi_0(0)}
$$
The above statement then implies that, 
$$
\lim_{t\to\infty} \widehat \P_t(L_t\approx \mu_x) =  \frac{\psi_0(-x)}{\int_{\R^3} \d y \, \psi_0(y) }= \frac{\psi_0(x)}{\int_{\R^3} \d y \, \psi_0(y) }.
$$
Combined with the tube property stated in Theorem \ref{thm-tubeweak}, Theorem \ref{thm1} then follows. 
Justifying the above heuristic idea will be the content of the rest of the article. 

 \section{Tightness: Proof of Theorem \ref{thm2}}\label{sectionthm2proof} 
 In this section 
 we will prove Theorem \ref{thm3}. Theorem \ref{thm4} contains the main argument. 

Let $B_r(x)$ denote the ball of radius $r>0$ around $x\in \R^3$ and
$$
\tau_{r}(x)= \inf \{s>0\colon \big|W_s- x\big| \leq r\}
$$
be the first hitting time of $B_{r}(x)$.
We also denote by $\xi _{r}(x)$ the time the Brownian path spends in $B_{1}\big(W_{\tau _{r}(x) }\big) $ after time $\tau _{r}(x)$, before exiting this ball for the first time.




Given any $\eps>0$, we choose a radius $r_\eps>0$ so that
\begin{equation}\label{radiusrequirement}
r_\eps/2\geq 1/\eps, \qquad\mu_0(B_{r_\eps/2}(0)^{\rm c})\leq \eps, \qquad \psi_0^2(\cdot)\leq \eps \,\,\mbox{on} \,\,B_{r_\eps/2}(0)^{\rm c}.
\end{equation}

We will denote by $\d$ the Prohorov metric on the set of probability measures on $\R^3$. This metric induces the weak topology, which is governed by test integrals against continuous and bounded functions. 
For any two probability measures $\mu, \nu$ on $\R^3$, we will also denote by $\|\mu-\nu\|_{\mathrm{TV}}$ the total-variation distance between $\mu$ and $\nu$.

\begin{theorem}\label{thm4}
If $\eps>0$ is chosen small enough, there exists $M(\eps)>0$ such that for $\min\{|x|, t, u \}> M(\eps)$ and any $\theta\in (0,1]$, 
\begin{equation}
\widehat \P_t\bigg\{L_t\in U_\eps(\mu_x), \xi_{r_\eps}(x)> \theta, \tau_{r_\eps}> u\bigg\}\leq \frac C\theta \e^{-u/C}
\end{equation}
for some universal constant $C>0$. Here $U_\eps(\mu)$ denotes the $\eps$-neighborhood of $\mu$ in the Prohorov metric.
\end{theorem}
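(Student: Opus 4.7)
The plan is to implement the partial path exchange sketched in Section \ref{heuristicthm2}. On the event $A:=\{L_t\in U_\eps(\mu_x),\,\xi_{r_\eps}(x)>\theta,\,\tau_{r_\eps}(x)>u\}$ I would introduce the stopping times $\tau:=\tau_{r_\eps}(x)$ and $\tau':=\tau+\xi_{r_\eps}(x)$, and split $L_t=\frac{\tau'}{t}L_{[0,\tau']}+\frac{t-\tau'}{t}L_{[\tau',t]}$. This expands the Hamiltonian exactly as in \eqref{eqheur0} (with $t_0$ replaced by $\tau'$) into a self-interaction on $[0,\tau']$, a cross-term $\langle L_{[0,\tau']},\Lambda_{[\tau',t]}\rangle$, and a self-interaction on $[\tau',t]$. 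The aim is to bound $\E[\1_A\exp\{tH(L_t)\}]$ term by term and then divide by $Z_t$, using the Donsker--Varadhan lower bound $Z_t\geq e^{t\rho-o(t)}$.

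\textbf{Key steps.} First, I would invoke Theorem \ref{tubeunifmetric} to replace $\Lambda_{[\tau',t]}$ by $\Lambda\psi_x^2$ in sup-norm on $\{L_t\in U_\eps(\mu_x)\}$, up to an exponentially small remainder. Since on $A$ the measure $L_{[0,\tau']}$ lives outside $B_{r_\eps}(x)$ except on the short interval $[\tau,\tau']$ (when it sits in $B_1(W_\tau)\subset B_{r_\eps+1}(x)$), and $\Lambda\psi_x^2(z)=O(1/|z-x|)$ away from $x$, the cross term is at most $O(\eps\tau'+\tau'/r_\eps)$. Proposition \ref{expdecaysupnorm} then allows discarding the event $\{\|\Lambda_{[0,\tau']}\|_\infty$ large$\}$, so the first self-interaction contributes only $O(\tau')$ on its complement. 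Next, applying the strong Markov property at $\tau'$ and conditioning on $W_{\tau'}=y\in B_{r_\eps+1}(x)$, I would translate the $[\tau',t]$-piece by $-x$; using shift invariance of $H$, its occupation measure then lies in an $O(r_\eps+\eps)$-neighborhood of $\mu_0$, and the $\exp\{((t-\tau')^2/t)H\}$-expectation over this piece is bounded by a universal constant times $Z_{t-\tau'}$, via standard transition-density control at $y-x\in B_{r_\eps+1}(0)$. The factor $\theta^{-1}$ will emerge from the renewal normalization $\1_{\{\xi_{r_\eps}(x)>\theta\}}\leq \xi_{r_\eps}(x)/\theta$ integrated against the (uniformly bounded) exit-time density from a unit ball.

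\textbf{Assembly and main obstacle.} Putting everything together should yield $\E[\1_A\exp\{tH(L_t)\}]\leq C\theta^{-1}e^{(\rho-\delta)\tau'}Z_{t-\tau'}$ for some $\delta=\delta(\eps,\psi_0)>0$. Dividing by $Z_t\geq e^{\rho\tau'-o(t)}Z_{t-\tau'}$ then produces $\widehat\P_t(A)\leq C\theta^{-1}e^{-\delta\tau'/2}\leq C\theta^{-1}e^{-\delta u/2}$, as required. The main difficulty lies in quantifying the gap $\delta>0$: one must show that the Hamiltonian accumulated on $[0,\tau']$ is strictly suboptimal by a positive amount per unit time. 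This uses the choice \eqref{radiusrequirement} of $r_\eps$ together with the uniqueness \eqref{shiftunique} of the Pekar maximizer to keep $L_{[0,\tau']}$ bounded away from $\mathfrak m$ in the weak topology. Converting this weak-topology gap into the quantitative exponential penalty $e^{-\delta u/2}$ is naturally done via Girsanov, transitioning to the ergodic Markov process with generator $\tfrac12\Delta+(\nabla\psi_0/\psi_0)\cdot\nabla$, in the spirit of the heuristic identity \eqref{eqheur}.
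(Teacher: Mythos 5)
Your decomposition of the Hamiltonian, the use of the tube property to replace $\Lambda_{[\tau',t]}$ by $\Lambda\psi_x^2$, and the Girsanov/ergodic-theorem argument for the exponential gain are all in the spirit of the paper's proof. However, there is a genuine gap in the final assembly step that I do not see how to close with your ingredients.

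\textbf{The $Z_t$ lower bound is insufficient.} You propose to divide your upper bound for the numerator by a generic Donsker--Varadhan lower bound $Z_t\geq\e^{t\rho-o(t)}$, or equivalently $Z_t\geq \e^{\rho t_0-o(t)}Z_{t-t_0}$. The issue is that the error term is $o(t)$, not $o(t_0)$: Donsker--Varadhan asymptotics only control $\frac1t\log Z_t$, so the unknown sub-exponential corrections in $Z_t$ and $Z_{t-t_0}$ need not cancel to within anything comparable to $\delta t_0$. Concretely, with $t_0\in[u,c_1\eps t]$ one gets, for any fixed $\delta'>0$ and $t$ large, a ratio of order $\e^{-\delta t_0+2\delta' t}$, and since $t_0$ may be of order $u$ while $t\to\infty$, the $2\delta' t$ term destroys the bound. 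The paper's proof avoids this entirely by producing a \emph{matched} lower bound for $Z_t$: it writes $Z_t=\E_x[\E_x(\e^{tH(L_t)}\mid\mathcal F_{t_0,t})]$, restricts to the same terminal event $\{W_{t_0}\in B_1,\ \|\Lambda_{t_0,t}-\Lambda\psi_x^2\|_\infty\leq c_3\sqrt\eps\}$ appearing in the numerator, and then changes the measure on $W_{t_0}$ from $\P_x$ to $\P_0$ via the explicit transition-density ratio $p_{t_0}(x,y)/p_{t_0}(y,0)\geq1$ for $|x|$ large. This way the contribution $\exp\{\frac{(t-t_0)^2}{t}H(L_{t_0,t})\}$ from the $[t_0,t]$-part of the path cancels \emph{exactly} between numerator and denominator, and the exponential gain $\e^{c_9 t_0}$ comes purely from the Girsanov/Euler--Lagrange/ergodic argument on the $[0,t_0]$-piece (the paper's Lemma~\ref{lemma-claim}). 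Without this exact cancellation the argument does not close.

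\textbf{A secondary structural difference.} You split the Hamiltonian at the random stopping time $\tau'=\tau+\xi$ and invoke the strong Markov property there. The paper instead replaces the event $\{\xi>\theta,\ c_1\eps t\geq\tau>u\}$ by $\frac1\theta\int_u^{c_1\eps t+\theta}\d t_0\,\1\{\xi>\theta,\ t_0\geq\tau>t_0-\theta\}$ and works with the \emph{deterministic} time $t_0$ throughout. This is what produces the $1/\theta$ factor cleanly, it makes the coefficients $t_0^2/t$, $t_0(t-t_0)/t$, $(t-t_0)^2/t$ in the Hamiltonian split nonrandom, and — crucially — it allows $t_0$ to be frozen in the matched lower bound for $Z_t$ described above. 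Splitting at the random $\tau'$ gives random coefficients and a random remaining time $t-\tau'$ in the $[\tau',t]$-expectation, and it is not clear that your renewal normalization $\1_{\{\xi>\theta\}}\leq\xi/\theta$ can then be disentangled from the Hamiltonian expectation without essentially redoing the paper's integration over $t_0$.
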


\begin{proof}
We will prove this theorem in several steps. To abbreviate notation, we will write 
$$
\tau= \tau_{r_\eps}(x), \quad\xi=\xi_{r_\eps}(x), \quad B_1=B_{1}(W_\tau).
$$
Along the way, we will  introduce positive constants $c_1,c_2,c_3,\dots$, which do not depend on $t$ nor on $\eps$ nor on randomness,  whose values will not alter after
being introduced. However, there will be a universal constant $C$ whose value may and will change from appearance to appearance.  Throughout the proof, we will denote by 
\begin{equation}\label{def-A}
A_t(x,\theta,u)=A_t(x,\eps,\theta,u)=\bigg\{L_t\in U_\eps(\mu_x), \xi_{r_\eps}(x)> \theta, \tau_{r_\eps}> u\bigg\}
\end{equation}
{\bf{STEP 1:}} In this step we will prove
\begin{lemma}\label{lemma-step1}
For some constant $c_1>0$,
$$\widehat\P_t\big\{A_t(x,\theta,u)\big\}\leq \frac 1 {\theta\,Z_t}\,\,\int_u^{c_1\eps t+ 1} \d t_0\,\,\E \bigg\{\e^{tH(L_t)}\, \1_{\big\{L_t\in U_\eps(\mu_x), \,\,W_{t_0}\in {B_1}, \,\,\tau>t_0-\theta\big\}}\bigg\}.
$$
\end{lemma}
\begin{proof}
Let us first note that, on the event $\big\{L_t\in U_\eps(\mu_x)\big\}$, for some constant $c_1>0$, 
$$
\tau \leq c_1 \eps t < t.
$$
Indeed, let us choose $M(\eps)>r_\eps$. Since we are interested in the region $|x|> M(\eps)$, we have $0\notin B_{r_\eps}(x)$. Furthermore, since $|x|> r_\eps$ and 
$L_t\in U_\eps(\mu_x)$,  the time the Brownian motion spends outside $B_{r_\eps}(x)$ is less than a proportion of $\eps t$. Hence, on the event $\{L_t\in U_\eps(\mu_x)\}$, $\tau\leq c_1\eps t\leq t$, for some constant $c_1>0$.

Now, let us estimate
\begin{equation}\label{prooflemma3est00}
\begin{aligned}
&\E \bigg\{\e^{tH(L_t)}\, \1_{\big\{L_t\in U_\eps(\mu_x), \,\,\xi> \theta, \,\, \tau> u\big\}}\bigg\}
\\
&\leq\E \bigg\{\e^{tH(L_t)}\, \1_{\big\{L_t\in U_\eps(\mu_x), \,\,\xi> \theta, \,\, c_1 \eps t \geq \tau> u\big\}}\bigg\} \\
&\leq \frac 1\theta\int_u^{c_1\eps t+ \theta} \d t_0\,\,\E \bigg\{\e^{tH(L_t)}\, \1_{\big\{L_t\in U_\eps(\mu_x), \,\,\xi> \theta, \,\,t_0\geq \tau>t_0-\theta\big\}}\bigg\}
\\
&\leq \frac 1\theta\int_u^{c_1\eps t+ 1} \d t_0\,\,\E \bigg\{\e^{tH(L_t)}\, \1_{\big\{L_t\in U_\eps(\mu_x), \,\,W_{t_0}\in {B_1}, \,\,\tau>t_0-\theta\big\}}\bigg\}.
\end{aligned}
\end{equation}
In the last step we have argued that on the event $\{\xi> \theta, t_0-\theta<\tau\leq t_0\}$, the path is the on the ball $B_1$ of radius $1$ around $W_\tau$. Furthermore, we chose $\eps>0$ small enough so that $c_1\eps t+ 1\leq t$ for all sufficiently large $t$,  such that $t_0<t$ inside the integrand. Also, we will assume that $t$ is so large that $t_0\leq c_1\eps t+ 1 \leq C\eps t$ for some $C$. This proves Lemma \ref{lemma-step1}.
\end{proof}

{\bf{STEP 2:}} In this step we will make some replacements in the event appearing in the last display in \eqref{prooflemma3est00} and decompose the expectation inside the integrand accordingly.
The precise statement concerning this decomposition can be found in Lemma \ref{lemma-step2}.

For our purposes, let us fix $t_0 \in [u, c_1 \eps t+ 1]$ and write the convex combination
\begin{equation}\label{Ltsplit}
L_t= \frac {t_0} t L_{t_0}+ \frac{(t-t_0)} t L_{t_0,t}
\end{equation}
with 
$$
L_{t_0,t}= \frac 1{t-t_0} \int_{t_0}^t \delta_{W_s} \d s
$$
denoting the normalized occupation measure of the Brownian path in the time interval $[t_0,t]$.  Since, 
$$
\|L_t- L_{t_0,t}\|_{\mathrm{TV}} \leq 2 t_0/t,
$$
we clearly have $\d\big(L_t, L_{t-t_0}\big) \leq  c_2 \eps$ for some constant $c_2$ and all sufficiently small $\eps$. On the event $\{L_t\in U_\eps(\mu_x)\}$, we have $L_{t-t_0}\in U_{c_2\eps}(\mu_x)$. Moreover, let us also denote by
$$
\Lambda_{t_0,t}(y)= \int \frac{L_{t_0,t}(\d z)}{|z-y|}
$$
the Coulomb functional of $L_{t_0,t}$.

Now we are ready to make the replacements in the event appearing in the last display in \eqref{prooflemma3est00}. Then, for some constant $c_3>0$ to be chosen later, 
\begin{equation}\label{prooflemma3est1}
\begin{aligned}
&\E \bigg\{\e^{tH(L_t)}\, \1_{\big\{L_t\in U_\eps(\mu_x), \,\,W_{t_0}\in {B_1}, \,\,\tau>t_0-\theta\big\}}\bigg\} \\
&\leq \E \bigg\{\e^{tH(L_t)}\, \1_{\big\{L_{t_0,t}\in U_{c_2\eps}(\mu_x), \,\,W_{t_0}\in {B_1}, \,\,\tau>t_0-\theta, \,\, \|\Lambda_{t_0,t}-\Lambda\psi_x^2\|_\infty\leq c_3\sqrt\eps\big\}}\bigg\}\\
&\qquad\qquad+\E \bigg\{\e^{tH(L_t)}\, \1_{\big\{L_{t}\in U_{\eps}(\mu_x), \,\, \, \|\Lambda_{t_0,t}-\Lambda\psi_x^2\|_\infty> c_3\sqrt\eps\big\}}\bigg\}.
\end{aligned}
\end{equation}
The second term on the right hand side above can be rewritten as, for some constant $c_4<c_3$, 
\begin{equation}\label{prooflemma3est1.5}
\begin{aligned}
&\E \bigg\{\e^{tH(L_t)}\, \1_{\big\{L_{t}\in U_{\eps}(\mu_x), \,\, \, \|\Lambda_{t_0,t}-\Lambda\psi_x^2\|_\infty> c_3\sqrt\eps\big\}}\bigg\} \\
&=
\E \bigg\{\e^{tH(L_t)}\, \1_{\big\{L_{t}\in U_{\eps}(\mu_x), \,\, \, \|\Lambda_{t}-\Lambda\psi_x^2\|_\infty> c_4\sqrt\eps\big\}}\bigg\}  \\
&\qquad+\E \bigg\{\e^{tH(L_t)}\, \1_{\big\{\|\Lambda_{t_0,t}-\Lambda\psi_x^2\|_\infty> c_3\sqrt\eps, \,\, \, \|\Lambda_{t}-\Lambda\psi_x^2\|_\infty\leq c_4\sqrt\eps\big\}}\bigg\}
\end{aligned}
\end{equation}
Let us estimate the second term on the right hand side above. Note that 
$$
\|\Lambda_t-\Lambda_{t_0,t}\|_\infty= \frac{t_0}t \|\Lambda_{t_0}-\Lambda_{t_0,t}\|_\infty
\leq C\eps\|\Lambda_{t_0}-\Lambda_{t_0,t}\|_\infty
\leq C\eps\big(\|\Lambda_{t_0}\|_\infty+\|\Lambda_{t_0,t}\|_\infty\big).
$$
On the prescribed events on the second term on the right hand side of \eqref{prooflemma3est1.5}, 
$$
\|\Lambda_t-\Lambda_{t_0,t}\|_\infty \geq \|\Lambda_{t_0,t}-\Lambda\psi_x^2\|_\infty - \|\Lambda_t-\Lambda\psi_x^2\|_\infty \geq (c_3-c_4)\sqrt\eps.
$$
Hence, we have the estimate
\begin{equation}\label{prooflemma3est2}
\begin{aligned}
&\E \bigg\{\e^{tH(L_t)}\, \1_{\big\{\|\Lambda_{t_0,t}-\Lambda\psi_x^2\|_\infty> c_3\sqrt\eps, \,\, \, \|\Lambda_{t}-\Lambda\psi_x^2\|_\infty\leq c_4\sqrt\eps\big\}}\bigg\} \\
&\leq \E \bigg\{\e^{tH(L_t)}\, \1_{\big\{\|\Lambda_{t_0}\|_\infty \geq a_\eps\big\}}\bigg\}+ \E \bigg\{\e^{tH(L_t)}\, \1_{\big\{\|\Lambda_{t_0,t}\|_\infty \geq b_\eps\big\}}\bigg\}
\end{aligned}
\end{equation}
where $a_\eps=a \eps^{-1/2}$, $b_\eps=b \eps^{-1/2}$ for some $a, b>0$. Summarizing, we have proved 
\begin{lemma}\label{lemma-step2}
For some suitably chosen constants $c_2,c_3,c_4>0$, 
\begin{equation}\label{eq-lemma-step2}
\begin{aligned}
&\E \bigg\{\e^{tH(L_t)}\, \1_{\big\{L_t\in U_\eps(\mu_x), \,\,W_{t_0}\in {B_1}, \,\,\tau>t_0-\theta\big\}}\bigg\} \\
&\leq \E \bigg\{\e^{tH(L_t)}\, \1_{\big\{L_{t_0,t}\in U_{c_2\eps}(\mu_x), \,\,W_{t_0}\in {B_1}, \,\,\tau>t_0-\theta, \,\, \|\Lambda_{t_0,t}-\Lambda\psi_x^2\|_\infty\leq c_3\sqrt\eps\big\}}\bigg\} \\
&\qquad+\E \bigg\{\e^{tH(L_t)}\, \1_{\big\{L_{t}\in U_{\eps}(\mu_x), \,\, \, \|\Lambda_{t}-\Lambda\psi_x^2\|_\infty> c_4\sqrt\eps\big\}}\bigg\}  \\
&\qquad+\E \bigg\{\e^{tH(L_t)}\, \1_{\big\{\|\Lambda_{t_0}\|_\infty \geq a_\eps\big\}}\bigg\}+ \E \bigg\{\e^{tH(L_t)}\, \1_{\big\{\|\Lambda_{t_0,t}\|_\infty \geq b_\eps\big\}}\bigg\}
\end{aligned}
\end{equation}
where $a_\eps=a \eps^{-1/2}$, $b_\eps=b \eps^{-1/2}$ for some $a, b>0$.
\end{lemma}\qed


{\bf{STEP 3:}} Estimating the last two remainder terms on right hand side of \eqref{eq-lemma-step2} will be the task of the next step
and we will prove the following important lemma.
\begin{lemma}\label{step3lemma}
For any $a, b>0$ large enough,
\begin{equation}\label{prooflemma3est2.5}
\begin{aligned}
&\limsup_{t\to \infty} \,\,\sup_{t_0\leq  t}\,\,\frac 1 {t_0}\log \widehat\P_t\bigg\{\|\Lambda_{t_0}\|_\infty >a\bigg\} <0,\\
&\limsup_{t\to \infty} \,\, \sup_{t_0\leq  t}\,\,\frac 1 {t- t_0}\log \widehat\P_t\bigg\{\|\Lambda_{t_0,t}\|_\infty >b\bigg\} <0.
\end{aligned}
\end{equation}
\end{lemma}

\begin{proof}

Let us prove the first statement in \eqref{prooflemma3est2.5} and denote by $A_{t_0}= \{\|\Lambda_{t_0}\|_\infty >a\}$, and 
note that, for any $\sigma>0$, $H(L_\sigma)=\langle \Lambda_\sigma, L_\sigma\rangle \leq \|\Lambda_\sigma\|_\infty$. We recall the convex decomposition \eqref{Ltsplit}. Then the Hamiltonian $H(L_t)$ also decomposes
accordingly and can be estimated as 
\begin{equation}\label{Hdecomp}
\begin{aligned}
t H(L_t)&= \frac{t_0^2} t \big\langle \Lambda_{t_0}, L_{t_0}\big\rangle+ 2 \frac{t_0 (t-t_0)}t \big\langle \Lambda_{t_0}, L_{t_0,t}\big\rangle+ \frac{(t-t_0)^2}t H(L_{t_0,t}) \\
&\leq  t_0 \|\Lambda_{t_0}\|_\infty + 2t_0 \|\Lambda_{t_0}\|_\infty +\frac{(t-t_0)^2}t H(L_{t_0,t})
\end{aligned}
\end{equation}
since $t_0\leq  t$. Then, by the Markov property at time $t_0$,
$$
\begin{aligned}
\E\bigg\{\e^{tH(L_t)} \, \1_{A_{t_0}}\bigg\}
&\leq  \E\bigg\{\bigg(\e^{Ct_0\|\Lambda_{t_0}||_\infty} \, \1_{A_{t_0}}\bigg)\,\,\e^{\frac{(t-t_0)^2}t H(L_{t_0,t})}\bigg\} \\
&=\E\bigg[\bigg\{\e^{Ct_0\|\Lambda_{t_0}||_\infty} \, \1_{A_{t_0}}\bigg\}\,\ \,\E_{W_{t_0}}\bigg\{\e^{\frac{(t-t_0)^2}t H(L_{t-t_0})}\bigg\}\bigg] \\
&=\E\bigg\{\e^{Ct_0\|\Lambda_{t_0}||_\infty} \, \1_{A_{t_0}}\bigg\}\,\E\bigg\{\e^{\frac{(t-t_0)^2}t H(L_{t-t_0})}\bigg\}.
 \end{aligned}
 $$
In the last identity above we used the shift-invariance of $H$. On the other hand, 
by the above decomposition of $t H(L_t)$, we have a lower bound for the partition function, 
$$
Z_t= \E\big\{\e^{tH(L_t)}\big\} \geq  \E\big\{\e^{\frac{(t-t_0)^2}t H(L_{t_0,t})}\big\} =  \E\big\{\e^{\frac{(t-t_0)^2}t H(L_{t-t_0})}\big\}.
$$
Then, for $t_0\leq  t$, 
 $$
\widehat\P_t\big\{A_{t_0}\big\} \leq \E\big\{\e^{Ct_0\|\Lambda_{t_0}\|_\infty} \, \1_{A_{t_0}}\big\} \leq  \E\big\{\e^{2Ct_0\|\Lambda_{t_0}\|_\infty}\big\}^{1/2} \P(A_{t_0})^{1/2}.
 $$
By Proposition~\ref{expdecaysupnorm}, for any $a>0$,
$$
\limsup_{t_0\to\infty} \frac 1 {2t_0} \log \P\{A_{t_0}\}= \limsup_{t_0\to\infty} \frac 1 {2t_0} \log \P\big\{\|\Lambda_{t_0}\|_\infty >a\big\}<0.
$$
The proof of Corollary 1.4 in \cite{KM15} reveals that the above negative exponential rate 
can be made as large as needed if we chose $a>0$ large enough. Hence, the first
assertion in \eqref{prooflemma3est2.5} is proved, once we justify, for any $C>0$,
\begin{equation}\label{prooflemma3est2.55}
\limsup_{t_0\to\infty} \frac 1 {2t_0} \log \E\big\{\e^{Ct_0\|\Lambda_{t_0}\|_\infty}\big\} <\infty.
\end{equation}
This assertion will follow from the regularity properties of the random function $x\mapsto \Lambda_1(x)$, derived in in \cite{KM15}. Indeed, note that, by successive conditioning, the Markov property and the shift-invariance of $\|\Lambda_1\|_\infty$,
it is enough to justify that some exponential moment of $\|\Lambda_1\|_\infty$ is finite. Note that, we can write, for any $\delta>0$, 
\begin{equation}\label{prooflemma3est2.56}
\|\Lambda_1\|_\infty \leq \sup_{x_1, x_2 \in \R^3\colon |x_1-x_2|\leq\delta} \big|\Lambda_1(x_1)-\Lambda_1(x_2)\big| + \sup_{x\in \delta\Z^3} \int_0^1 \frac {\d s} {|W_s -x|}.
\end{equation}
Let us now handle the first summand. In Lemma 2.2 in \cite{KM15} we proved that, for any $\eta\in (\frac 13, \frac 12)$, if 
$$
M= \int\int_{|x_1-x_2|\leq 1}\d x_1\d x_2\, \bigg[\exp\bigg\{\beta\bigg(\frac{|\Lambda_1(x_1)-\Lambda_1(x_2)|}{|x_1-x_2|^a}\bigg)^\rho\bigg\}- 1 \bigg],
$$
where $\rho=\frac 1 {1-\eta}>1$ and $a=1-2\eta>0$, then, for some $\beta>0$,  
\begin{equation}\label{prooflemma3est2.57}
\E(M) <\infty.
\end{equation}
Using the Garsia-Rodemich-Rumsey estimate, we also proved that (see the proof of Proposition 1.3 in \cite{KM15}), for some fixed constant $\gamma>0$, 
$$
\sup_{|x_1-x_2|\leq \delta} \big|\Lambda_1(x_1)-\Lambda_1(x_2)\big| \leq \frac{1-2\eta}{\beta^{1/\rho}} \int_0^\delta \log\bigg(1+ \frac M{\gamma u^6}\bigg)^{1/\rho} \, u^{-2\eps} \d u.
$$
Now if we choose $\delta$ small enough, then the right hand side above is smaller than 
$$
\frac{1-2\eta}{\beta^{1/\rho}} C(\delta) \log\big(M\vee 1\big)^{1/\rho}
$$
for some constant $C(\delta)$ which goes to $0$ as $\delta\to 0$. Hence, for any $C>0$, by \eqref{prooflemma3est2.57}, we have
$$
\E\bigg\{\e^{C\sup_{ |x_1-x_2|\leq\delta} \big|\Lambda_1(x_1)-\Lambda_1(x_2)\big|}\bigg\}<\infty.
$$
Let us turn to the second term on the right hand side of \eqref{prooflemma3est2.56}. Since we are interested in the behavior 
of the path in the time horizon $[0,1]$, it is enough to estimate the supremum in a bounded box. We will show that, for any fixed $\delta>0$ and any $C>0$,
\begin{equation}\label{prooflemma3est2.58}
\E\bigg[\sup_{\heap{x\in \delta\Z^3}{|x\leq 2}}\exp\bigg\{ C\int_0^1 \frac {\d s} {|W_s -x|}\bigg\}\bigg] \leq (2/\delta)^3\E\bigg[\exp\bigg\{ C\int_0^1 \frac {\d s} {|W_s |}\bigg\}\bigg] <\infty.
\end{equation}
For any $\eta>0$, we can write $1/|x|= V_\eta(x)+ Y_\eta(x)$ for $V_\eta(x)= 1/(|x|^2+\eta^2)^{1/2}$. Since, for any fixed $\eta>0$, $V_\eta$ is a bounded function, 
the above claim holds with $V_\eta(W_s)$ replacing $1/|W_s|$. Hence, (by Cauchy-Schwarz inequality, for instance), it suffices to check the above statement with the difference $Y_\eta(W_s)$, which can be written as
$$
\begin{aligned}Y_\eta(x)=\frac{1}{|x|}-\frac{1}{\sqrt {\eta^2+|x|^2}}
&=\frac{\sqrt {\eta^2+|x|^2}-|x|}{|x|\sqrt{\eps^2+|x|^2}}=\frac{\eta^2}{|x|+\sqrt {\eta^2+|x|^2}}\,\,\frac{1}{\sqrt {\eta^2+|x|^2}}\,\,\frac{1}{|x|}\\
&=\eta^{-1}\phi\bigg(\frac{x}{\eta}\bigg),
\end{aligned}
$$
with
$$
\phi(x)=\frac{1}{|x|}\,\,\frac 1{\sqrt{1+|x|^2}}\,\, \frac 1{|x|+\sqrt{1+|x|^2}}.
$$
One can bound  $\phi(x)$ by $\frac{b}{|x|^\frac{3}{2}}$, since it behaves like $\frac{1}{|x|}$ near $0$ and like $\frac{1}{|x|^3}$ near $\infty$. In particular 
$$
Y_\eta(x)\le \frac{b\sqrt{\eta}}{|x|^\frac{3}{2}}.
$$
Hence, for \eqref{prooflemma3est2.58}, it suffices to show, for $\eta>0$ small enough and any $C>0$,
\begin{equation}\label{prooflemma3est2.59}
\E\bigg[\exp\bigg\{C b \sqrt\eta \int_0^1 \frac{\d s}{|W_s|^{3/2}}\bigg\}\bigg] <\infty.
\end{equation}
For this, we appeal to Portenko's lemma (see \cite{P76}), which states that, if for a Markov process $\{\P^{\ssup x}\}$ and for a function $\widetilde V\ge 0$
$$
\sup_{x\in \R^3} \E^{\ssup x}\bigg\{\int_0^1 \widetilde V(W_s)\d s\bigg\}\leq \gamma<1
$$
then
$$
\sup_{x\in \R^3} \E^{\ssup x}\bigg\{\exp\bigg\{\int_0^1 \widetilde V(W_s)\d s\bigg\}\bigg\}\le \frac{\gamma}{1-\gamma} <\infty.
$$
Hence, to prove \eqref{prooflemma3est2.59}, we need to verify that 
$$
\begin{aligned}
\sup_{x\in \R^3} \E^{\ssup x}\bigg\{\int_0^1 \frac{\d \sigma}{|W_\sigma|^\frac{3}{2}}\bigg\}
=\sup_{x\in \R^3} \int_0^1 \d \sigma\int_{\R^3} \d y \,\, \frac{1}{|y|^\frac{3}{2}} \frac{1}{(2\pi \sigma)^\frac{3}{2} }\exp\bigg\{-\frac{(y-x)^2}{2\sigma}\bigg\}
<\infty.
\end{aligned}
$$
One can see that  
$$
\sup_{x\in \R^3} \int_{\R^3}\d y\frac{1}{|y|^\frac{3}{2}} \frac{1}{(2\pi \sigma)^\frac{3}{2} }\exp\bigg\{-\frac{(y-x)^2}{2\sigma}\bigg\}
$$ 
is attained at $x=0$ because we can rewrite the integral by Parseval's identity as 
$$ c\int_{\R^3} \exp\bigg\{-\frac{\sigma|\xi|^2}{2}+i\langle x,\xi\rangle\bigg\}\frac{1}{|\xi|^\frac{3}{2}}d\xi,
$$ 
where $c>0$ is a constant. When $x=0$, the integral reduces to $\int_0^1 \sigma^{-3/4} \,\,\d \sigma$, which is finite. This finishes
the proof of the first assertion in \eqref{prooflemma3est2.5}. The second assertion follows essentially the same arguments, if we upper estimate by Markov property,
$$
\E\bigg\{\e^{tH(L_t)} \, \1_{\{\|\Lambda_{t_0,t}\|_\infty >b\eps^{-1/2}\}}\bigg\} \leq \E\bigg\{\e^{\frac{t_0^2}t H(L_{t_0})}\bigg\} \E\bigg\{ \e^{C(t-t_0) \|\Lambda_{t-t_0}\|_\infty} \1_{\{\|\Lambda_{t-t_0}\|_\infty >b\eps^{-1/2}\}}\bigg\}
$$
and lower estimate
$$
Z_t=\E\big\{\e^{tH(L_t)}\big\} \geq \E\big\{\e^{\frac{t_0^2}t H(L_{t_0})}\big\}.
$$
Lemma \ref{step3lemma} is proved.
\end{proof}



{\bf{STEP 4:}} In this step we will estimate the second term on the right hand side in \eqref{eq-lemma-step2}. We will show that this term is also negligible as $t\to\infty$ by proving the following 
\begin{lemma}\label{lemma-step4}
Uniformly in $x$ on compacts (in particular, for the $x$ chosen in the statement of Theorem \ref{thm4}) and for small enough $\eps>0$,
\begin{equation}\label{prooflemma3est5}
\limsup_{t\to\infty}\frac 1t \log \widehat\P_t\bigg\{L_{t}\in U_{\eps}(\mu_x), \,\, \, \|\Lambda_{t}-\Lambda\psi_x^2\|_\infty> c_4\sqrt\eps\bigg\}<0.
\end{equation}
\end{lemma}
\begin{proof} By Theorem \ref{tubeunifmetric}, it is enough to justify,
\begin{equation}\label{prooflemma3est6}
\begin{aligned}
&\limsup_{t\to\infty}\frac 1t \log \widehat\P_t\bigg\{L_{t}\in U_{\eps}(\mu_x), \,\, \, \|\Lambda_{t}-\Lambda\psi_x^2\|_\infty> c_4\sqrt\eps,\\
&\qquad\qquad\qquad\qquad \qquad\qquad\inf_{y\in \R^3} \|\Lambda_{t}-\Lambda\psi_y^2\|_\infty\leq c_4\sqrt\eps\bigg\}<0.
\end{aligned}
\end{equation}
Suppose $y\in \R^3$ be such that $|y-x|\leq \sqrt\eps$. Since $\psi_0^2$ is a smooth function vanishing at infinity (see \cite{L76}) and the Coulomb function $x\mapsto 1/|x| $ lies in $L^1_{\mathrm{loc}}(\R^3)$, the function
$\Lambda\psi^2_0= \psi_0^2\star1/|\cdot|$ is smooth and hence a Lipschitz function. Hence, for some $c_5$, we have
$$
\big|\Lambda\psi_x^2(x)- \Lambda\psi_y^2(x)\big|= \big|\Lambda\psi_0^2(0)- \Lambda\psi_0^2(y-x)\big| \leq c_5 \sqrt \eps.
$$
Then, if we chose $c_4>c_5$, on the event, $\|\Lambda_{t}-\Lambda\psi_x^2\|_\infty> c_4\sqrt\eps$, for $|y-x|\leq \eps$,
\begin{equation}\label{prooflemma3est6.5}
 \|\Lambda_{t}-\Lambda\psi_y^2\|_\infty \geq (c_4-c_5) \sqrt \eps.
 \end{equation}
On the other hand, since $\psi_0^2$ is concentrated at $0$, a simple argument using polar coordinates and triangle inequality shows that, 
if $|y-x|\geq \sqrt\eps$,  
 $$
\big|\Lambda\psi_x^2(x)- \Lambda\psi_y^2(x)\big|= \big|\Lambda\psi_0^2(0)- \Lambda\psi_0^2(y-x)\big| \geq c_6 \sqrt \eps.
$$ 
Hence, for $|y-x|\geq \eps$ and for any $\eta>0$,
$$
\begin{aligned}
\|\Lambda_{t}-\Lambda\psi_y^2\|_\infty&= \sup_{w\in\R^3}\big| \Lambda_{t}(w)-\Lambda\psi_y^2(w)\big| \\
&\geq \big|\Lambda\psi_x^2(x)- \Lambda\psi_y^2(x)\big|- \bigg| \int_{B_\eta(x)} \frac{L_t(\d z)- \psi_x^2(z)\d z}{|z-x|} +\int_{B_\eta(x)^c} \frac{L_t(\d z)- \psi_x^2(z)\d z}{|z-x|}\bigg| \\
&\geq c_6 \sqrt\eps- \bigg[\int_{B_\eta(x)} \frac{L_t(\d z)}{|z-x|} + \int_{B_\eta(x)}\frac{\psi_x^2(z)\d z}{|z-x|}+ \frac 1 \eta\bigg\langle \frac{\eta}{|\cdot- x|}\wedge 1, L_t- \psi_x^2\bigg\rangle\bigg]\\
&\geq c_6 \sqrt\eps- \bigg[\int_{B_\eta(0)} \frac{L_t(\d z)}{|z|} + \int_{B_\eta(0)}\frac{\psi_0^2(z)\d z}{|z-x|}+ \frac 1 \eta\d\big(L_t,\psi_x^2\big)\bigg]\\
&\geq c_6 \sqrt\eps- \bigg[\int_{B_\eta(0)} \frac{L_t(\d z)}{|z|} + \eta^2+ \frac \eps\eta\bigg],
\end{aligned}
$$
since $L_t\in U_\eps(\psi_x^2)$. Let us chose $\eta=\sqrt \eps$. Then above estimate and \eqref{prooflemma3est6.5} imply that, for \eqref{prooflemma3est6}, it is enough to derive, for some constant $c_7>0$
and $\eps$ small enough, 
$$
\limsup_{t\to\infty}\frac 1t \log \widehat\P_t\bigg\{\int_{B_{\sqrt\eps}(0)} \frac {L_t(\d z)}{|z|} > c_7 \sqrt\eps\bigg\}<0.
$$
But the above fact follows from \cite{KM15} (see the proof of Eq.(3.6), p.15, \cite{KM15}). Hence, \eqref{prooflemma3est6} and Lemma \ref{lemma-step4} is proved. 
\end{proof}

{\bf{STEP 5: Proof of Theorem \ref{thm4}.}} 

In this step we will prove Theorem \ref{thm4}. Recall the requirement \eqref{radiusrequirement}, the event $A_t(x,\theta,u)$ from \eqref{def-A} and that $\tau=\tau_{r_\eps}(x)$ denotes the first hitting time of the ball $B_{r_\eps}(x)$, and $\xi=\xi_{r_\eps}(x)$
stands for the time the path spends in the ball $B_1(W_\tau)$, after time $\tau$ and before exiting $B_1(W_\tau)$ for the first time. Note that we need to show, 
that for $\eps>0$  small enough, there exists $M(\eps)>0$ such that for $\min\{|x|, t, u \}> M(\eps)$ and any $\theta\in (0,1]$, 
\begin{equation}
\widehat \P_t\big\{A_t(x,u,\theta)\big\}\leq \frac C\theta \e^{-u/C}
\end{equation}
for some universal constant $C>0$.

We note that in order to prove the above estimate, thanks to Lemma \ref{lemma-step1}, Lemma \ref{lemma-step2}, Lemma \ref{step3lemma} and Lemma \ref{lemma-step4}, 
it is enough to estimate the first term on the right hand side of \eqref{prooflemma3est1}. 
Note that we can rewrite this term as 
\begin{equation}\label{prooflemma3est6.5}
\begin{aligned}
&\E \bigg\{\e^{tH(L_t)}\, \1_{\big\{L_{t_0,t}\in W_{c_2\eps}(\mu_x), \,\,W_{t_0}\in {B_1}, \,\,\tau_{}>t_0-\theta, \,\, \|\Lambda_{t_0,t}-\Lambda\psi_x^2\|_\infty\leq c_3\sqrt\eps\big\}}\bigg\}
\\
&\leq \E \bigg\{\e^{tH(L_t)}\, \1_{\big\{W_{t_0}\in {B_1}, \,\,\tau_{}>t_0-\theta, \,\, \|\Lambda_{t_0,t}-\Lambda\psi_x^2\|_\infty\leq c_3\sqrt\eps, \, \|\Lambda_{t_0}\|_\infty\leq c_8\eps^{-1/2}\big\}}\bigg\}
\\
&\qquad\qquad+ \E \bigg\{\e^{tH(L_t)}\1\{\|\Lambda_{t_0}\|_\infty> c_8\eps^{-1/2}\}\bigg\}.
\end{aligned}
\end{equation}
Again the argument of Step 3 implies that, for $t$ chosen large enough, the second summand above is exponentially small in $t_0$ if $\eps>0$ is chosen small enough and 
$t_0$ is chosen large enough, see \eqref{prooflemma3est2.5}. Hence, we turn to the first summand on the right hand side in \eqref{prooflemma3est6.5}
and show that, for some constant $c_9>0$,

\begin{equation}\label{eq1-step5}
\begin{aligned}
&\widehat\P_t\bigg\{L_{t_0,t}\in W_{c_2\eps}(\mu_x), \,\,W_{t_0}\in {B_1}, \,\,\tau_{}>t_0-\theta, \,\, \|\Lambda_{t_0,t}-\Lambda\psi_x^2\|_\infty\leq c_3\sqrt\eps\bigg\} \\
&\leq \e^{C\sqrt\eps t_0} \,\,\e^{-c_9 t_0}.
\end{aligned}
\end{equation}
Combined with this estimate, as remarked before, Lemma \ref{lemma-step1}-Lemma \ref{lemma-step4} imply that, 
$$
\begin{aligned}
\widehat\P_t\big\{A_t(x,u,\theta)\big\}=\widehat\P_t \bigg\{ L_t\in U_\eps(\mu_x), \xi> \theta, \tau> u\bigg\}
&\leq \frac 1\theta\int_u^{c_1\eps t+ 1} \d t_0\,\,\e^{C\sqrt\eps t_0}\, \e^{-c_9t_0} \\
&\leq \frac C{\theta}\e^{-u/C},
\end{aligned}
$$
as desired in Theorem \ref{thm4}.

It remains to prove \eqref{eq1-step5}. Let us rewrite the decomposition of the Hamiltonian as
\begin{equation}\label{splitH}
t H(L_t)= \frac{t_0^2} t \big\langle \Lambda_{t_0}, L_{t_0}\big\rangle+ 2 \frac{t_0 (t-t_0)}t \big\langle \Lambda_{t_0}, L_{t_0,t}\big\rangle+ \frac{(t-t_0)^2}t H(L_{t_0,t}).
\end{equation}
We will handle the three contributions on the right-hand side separately. The first term is relatively easy to handle. Recall that we assumed that $t_0 \leq C \eps t$. Hence
$\frac{t_0^2}t \leq C \eps t_0$. Hence, 
\begin{equation}\label{prooflemma3est6.55}
\begin{aligned}
\hspace{5mm}\frac {t_0^2}t H(L_{t_0}) \leq C\eps t_0 H(L_{t_0})= C\eps t_0 \big\langle \Lambda_{t_0}, L_{t_0}\big\rangle &\leq C\eps t_0 \|\Lambda_{t_0}\|_\infty \\
&\leq C\sqrt \eps t_0,
\end{aligned}
\end{equation}
on the event $\{\|\Lambda_{t_0}\|_\infty\leq c_8\eps^{-1/2}\}$ under interest.

The second term is estimated
on the prescribed event $\big\{\|\Lambda_{t_0,t}-\Lambda\psi_x^2\|_\infty \leq c_3 \eps, \tau>t_0- \theta\big\}$ as follows:
\begin{equation}\label{prooflemma3est7}
\begin{aligned}
2\frac{t_0(t-t_0)}{t}\big\langle \Lambda_{t_0,t}, L_{t_0}\big\rangle 
&\leq t_0 C \sqrt\eps + \,\,\,t_0 \big\langle \Lambda\psi_x^2, L_{t_0}\big\rangle \\
&\leq  t_0 C \sqrt\eps + t_0 \big\langle \Lambda\psi_x^2, L_{t_0-\theta}\big\rangle\\
&\qquad+ \big\langle\Lambda\psi_x^2,\big\|L_{t_0}- L_{t_0-\theta}\big\|_{\mathrm{TV}} \big\rangle \\
&\leq  t_0 C \sqrt\eps + t_0 \big\langle \Lambda\psi_x^2, L_{t_0-\theta}\big\rangle.
\end{aligned}
\end{equation}


In the last line we used the fact that 
$$
\big\|L_{t_0}- L_{t_0-\theta}\big\|_{\mathrm{TV}} \leq \frac {2\theta} {t_0} < \frac 2u \leq C\sqrt\eps.
$$

Let us now estimate $\langle \Lambda\psi_x^2, L_{t_0-\theta}\rangle$ on the event $\{\tau> t_0- \theta\}$, for which we want to use 
the fact that $\psi_x^2$ puts most of its mass around $B_{r_\eps/2}(x)$, while on the event under interest, the Brownian path until time $t_0-\theta$
has not yet touched $B_{r_\eps}(x)$, recall the requirement \eqref{radiusrequirement}. Then, 
\begin{equation}\label{prooflemma3est8}
\begin{aligned}
&\big\langle \Lambda\psi_x^2, L_{t_0-\theta}\big\rangle \,\,\, \1_{\{\tau> t_0- \theta\}} \\
& =\int_{B_{r_\eps/2}(x)} L_{t_0-\theta}(\d z)\int_{B_{r_\eps/2}(x)^c} \frac{\psi_x^2(y) L_{t_0-\theta}}{|y-z|} \, \d y \\
&\qquad\qquad+ \int_{B_{r_\eps/2}(x)^c} L_{t_0-\theta}(\d z) \int_{B_{r_\eps/2}(x)^c \cap B_1(z)} \frac{\psi_x^2(y)}{|y-z|} \, \d y \\
&\qquad\qquad+ \int_{B_{r_\eps/2}(x)^c} L_{t_0-\theta}(\d z) \int_{B_{r_\eps/2}(x)^c \cap B_1(z)^c} \frac{\psi_x^2(y)}{|y-z|} \, \d y \\
&\leq \frac 2 {r_\eps}+ \eps \int L_{t_0-\theta}(\d z)\int_{B_1(0)} \frac{\d y} {|y|}+ \eps \int L_{t_0-\theta}(\d z) \\
&\leq C\eps,
\end{aligned}
\end{equation}
where we used that $|y-z| \geq r_\eps/2$ in the first integral, while $\psi_x^2(\cdot) \leq\eps$ on $B_{r_\eps/2}(x)^c$ for the second and third integral and that $|y-z| \geq 1$ in the third integral. Furthermore, recall that
$2/r_\eps \leq \eps$. If we combine \eqref{prooflemma3est7}, \eqref{prooflemma3est8} and \eqref{splitH},  we have an estimate for the first term on the right hand side of \eqref{prooflemma3est6.5}:

\begin{equation}\label{prooflemma3est8}
\begin{aligned}
&\E \bigg\{\e^{tH(L_t)}.\, \1_{\big\{W_{t_0}\in {B_1}, \,\,\tau_{}>t_0-\theta, \,\, \|\Lambda_{t_0,t}-\Lambda\psi_x^2\|_\infty\leq c_3\sqrt\eps, \|\Lambda_{t_0}\|_\infty\leq c_8\eps^{-1/2}\big\}}\bigg\} \\
&\leq \e^{C\sqrt\eps t_0} \,\,\E \bigg\{ \e^{\frac{(t-t_0)^2}t H(L_{t_0,t})} \,\,\, \1_{\|\Lambda_{t_0,t}-\Lambda\psi_x^2\|_\infty\leq c_3\sqrt\eps \,\,W_{t_0}\in {B_1} \big\}}\bigg\}.
\end{aligned}
\end{equation}

Let us denote by $\mathcal F_{t_0,t}$ the canonical $\sigma$-field generated by $(W_s)_{s\in[t_0,t]}$. We need the following estimate to conclude the proof of 
\eqref{eq1-step5}.
\begin{lemma}\label{lemma-claim}
On the event $\{\|\Lambda_{t_0,t}-\Lambda\psi_x^2\|_\infty\leq c_3\sqrt\eps, W_{t_0}\in {B_1}\}$, we have
\begin{equation}\label{claim}
\E_x\bigg[\e^{tH(L_t)}\big|\mathcal F_{t_0,t}\bigg]\geq \exp\bigg\{c_9t_0 + \frac{(t-t_0)^2}tH(L_{t_0,t})\bigg\}.
\end{equation}
\end{lemma}
We defer the proof of Lemma \ref{lemma-claim} until Step 6 and finish the proof of Theorem \ref{thm4} based on the above estimate. Note that,
$$
\begin{aligned}
Z_t&=\E_x\bigg\{\E_x\big(\e^{tH(L_t)}\big|\mathcal F_{t_0,t}\big)\bigg\}\\
&\geq \int_{B_1} \E_x\bigg\{\E_x\big\{\e^{tH(L_t)}\big|\mathcal F_{t_0,t}\big\} \, \1\big\{\|\Lambda_{t_0,t}-\Lambda\psi_x^2\|_\infty\leq c_3\sqrt\eps, W_{t_0}\in \d y\big\}\bigg\} 
\\
&\geq \int_{B_1} \, \frac{p_{t_0}(x,y)}{p_{t_0}(y,0)}\,\E_0\bigg\{\E_x\big\{\e^{tH(L_t)}\big|\mathcal F_{t_0,t}\big\}; \, \1\big\{\|\Lambda_{t_0,t}-\Lambda\psi_x^2\|_\infty\leq c_3\sqrt\eps, W_{t_0}\in \d y\big\}\bigg\}
\end{aligned}
$$
Note that, if $M(\eps)$ is chosen large enough, then for $y\in B_1=B_1(W_\tau)$, $t\geq u> M(\eps)$ and $|x|>M(\eps)$, we have $p_{t_0}(x,y)\geq p_{t_0}(y,0)$. Then, 
$$
\begin{aligned}
Z_t&\geq 
 \int_{B_1} \E_0\bigg\{\E_x\big\{\e^{tH(L_t)}\big|\mathcal F_{t_0,t}\big\} \, \1\big\{\|\Lambda_{t_0,t}-\Lambda\psi_x^2\|_\infty\leq c\eps, W_{t_0}\in \d y\big\}\bigg\} \\
&= \e^{c_9t_0}\E_0\bigg\{\exp\bigg\{\frac{(t-t_0)^2}tH(L_{t_0,t})\bigg\}\, \1\big\{\|\Lambda_{t_0,t}-\Lambda\psi_x^2\|_\infty\leq c\eps, W_{t_0}\in {B_1} \big\}\bigg\}
\end{aligned}
$$
This finishes the proof of Theorem \ref{thm4}, assuming Lemma \ref{lemma-claim}.

{\bf{STEP 6:}} In this step we will prove Lemma \ref{lemma-claim}. 

{\bf{Proof of Lemma \ref{lemma-claim}:}} Let us again recall the splitting introduced in \eqref{splitH}. Then, on the event $\|\Lambda_{t_0,t}-\Lambda\psi_x^2\|_\infty\leq C\sqrt\eps$, 
we have a lower bound
\begin{equation}\label{HamLB}
\begin{aligned}
tH(L_t)
&\geq \frac {(t-t_0)^2}{t} H(L_{t_0,t}) + 2t_0 \frac{(t-t_0)}t \big\langle \Lambda_{t_0,t}, L_{t_0}\big\rangle\\
& \geq \frac {(t-t_0)^2}{t} H(L_{t_0,t}) + 2t_0 \frac{(t-t_0)}t\bigg[\big\langle\Lambda\psi_x^2,L_{t_0}\big\rangle- \big\langle L_{t_0},\Lambda_{t_0,t}-\Lambda\psi_x^2\big\rangle\bigg] \\
&\geq  \frac {(t-t_0)^2}{t} H(L_{t_0,t}) + 2t_0 \frac{(t-t_0)}t\bigg[\big\langle\Lambda\psi_x^2,L_{t_0}\big\rangle- \big\|\Lambda_{t_0,t}-\Lambda\psi_x^2\big\|_\infty\bigg]\\
&\geq \frac {(t-t_0)^2}{t} H(L_{t_0,t}) + 2t_0 \bigg[\big\langle\Lambda\psi_x^2,L_{t_0}\big\rangle- C\sqrt\eps\bigg]\\
&\qquad\qquad\qquad\qquad- 2\frac{t_0^2}t \,\bigg[\|\Lambda\psi_x^2\|_\infty +C\sqrt\eps\bigg]\\
&\geq \frac {(t-t_0)^2}{t} H(L_{t_0,t}) + 2t_0 \bigg[\big\langle\Lambda\psi_x^2,L_{t_0}\big\rangle- C\sqrt\eps\bigg]- \widetilde C\eps t_0.
\end{aligned}
\end{equation}
 Hence, we infer, on $\{\|\Lambda_{t_0,t}-\Lambda\psi_x^2\|_\infty\leq c\sqrt\eps, W_{t_0}\in {B_1}\}$,
\begin{equation}\label{prooflemma3est9}
\begin{aligned}
\E_x\bigg[\e^{tH(L_t)}\big|\mathcal F_{t_0,t}\bigg] 
&\geq e^{-C\sqrt\eps t_0}\, \exp\bigg\{\frac {(t-t_0)^2}{t} H(L_{t_0,t})\bigg\} \\
&\qquad\qquad \E_x\bigg\{\exp\big\{2 t_0H(L_{t_0}\otimes \mu_x)\big\}\big| \,W_{t_0}\in {B_1}\bigg\}.
\end{aligned}
\end{equation}
Now we only need to handle the expectation on the right hand side.

We consider a diffusion with generator 
$$
\mathfrak L^{\ssup{\psi_x}}= \frac 12\Delta + \bigg(\frac{\nabla \psi_x}{\psi_x}\bigg)\cdot\nabla 
$$
corresponding to an ergodic Markov process $\P_x^{\ssup {\psi_x}}$ starting from $x$ with invariant density $\psi_x^2(\cdot)$. From the underlying expectation $\E_x$ on the right hand side 
of \eqref{prooflemma3est9}, we want to switch to the corresponding expectation $\E_x^{\ssup {\psi_x}}$. By the Cameron-Martin-Girsanov formula (\cite{SV79}),
\begin{equation}\label{measchange}
\begin{aligned}
&\frac{\d \P_x}{\d \P_x^{\ssup {\psi_x}}}\,\big(\omega\big)\bigg|_{\mathcal F_{t_0}}\\
&= \exp\bigg[-\int_0^{t_0} \frac {\nabla \psi_x(\omega_s)}{\psi_x(\omega_s)}\d W_s + \, \frac 12\int_0^{t_0} \bigg|\frac{\nabla \psi_x(\omega_s)}{\psi_x(\omega_s)}\bigg|^2\, \d s\bigg]\\
&= \exp\bigg[ \log \psi_x(\omega_0)- \log \psi_x(\omega_{t_0}) +\frac 12 \int_0^{t_0} \frac {\Delta \psi_x(\omega_s)}{\psi_x(\omega_s)}\, \d s\bigg]\\
&= \frac{\psi_x(x)}{\psi_x(\omega_{t_0})} \exp\bigg[ \frac 12 \int_0^{t_0} \frac {\Delta \psi_x(\omega_s)}{\psi_x(\omega_s)}\, \d s\bigg].
\end{aligned}
\end{equation}
Let us recall the variational formula \eqref{rhodef}:
$$
\rho= \sup_{\heap{\psi\in H^1(\R^3)}{\|\psi\|_2=1}}\bigg\{\int\int_{\R^3\times\R^3} \frac{\psi^2(x)\psi^2(y)} {|x-y|} \, \d x \, \d y\,\, - \, \frac 12 \int_{\R^3} \d x |\nabla\psi(x)|^2 \bigg\}
$$
A simple perturbation argument shows that the maximizing function $\psi_0\in H^1(\R^3)$ satisfies the Euler-Lagrange equation 
\begin{equation}\label{EL}
\bigg(\Delta + 4 \int_{\R^3}\frac {\psi_0^2(y)}{|x-y|}\, \d y\bigg)\psi_0(x)= \lambda \psi_0(x).
\end{equation}
We multiply \eqref{EL} on both sides by $\psi_0(x)$, integrate over $\R^3$ and recall that $\int_{\R^3}\psi_0^2=1$, to see that
$$
\lambda=4 \int\int_{\R^3\times\R^3}\frac{\psi_0^2(x)\psi_0^2(y)}{|x-y|}-  \|\nabla\psi_0\|_2^2 \geq 2 \rho >0.
$$
Now we divide \eqref{EL} by $\psi_0(x)$, plug in $x= W(s)$ and integrate on the time interval $[0,t_0]$ to get
$$
\begin{aligned}
\int_0^{t_0} \frac {\Delta \psi_0(W_s)}{\psi_0(W_s)}\, \d s +4 \int_0^{t_0} \int_{\R^3} \frac{\psi_0^2(y)\d y} {|W_s- y|}
&=\int_0^{t_0} \frac {\Delta \psi_0(W_s)}{\psi_0(W_s)}\, \d s + 4 t_0 H(L_{t_0}\otimes \mu_0)\\
& = \lambda t_0.
\end{aligned}
$$
Repeating the same argument for $\psi_x ^2=\psi_0^2\star\delta_x$ we get 
\begin{equation}\label{prooflemma3est10}
 2 t_0 H(L_{t_0}\otimes \mu_x) = \frac{\lambda t_0}2 - \frac 12 \int_0^{t_0} \frac {\Delta \psi_x(W_s)}{\psi_x(W_s)}\, \d s.
 \end{equation}
We now perform a change of measure in the expectation of the right hand side of \eqref{prooflemma3est9}, and combine the above identity with \eqref{measchange} to get
$$
\begin{aligned}
\E_x\bigg\{\exp\big\{2t_0H(L_{t_0}\otimes \mu_x)\big\}\big| \,W_{t_0}\in {B_1}\bigg\} 
&\geq \int_{B_1} \E_x\bigg[\exp\big\{2t_0 H(L_{t_0}\otimes\mu_x)\big\}\, \1\big\{W_{t_0}\in \d y\big\}\bigg]\\
&= \e^{\lambda t_0/2}\,\int_{B_1}\,E_x^{\ssup {\psi_x}}\bigg[\frac{\psi_x(x)}{\psi_x(y)}\, \1{\big\{W_{t_0}\in \d y\big\}}\bigg] \\
&= \e^{\lambda t_0/2}\,\int_{B_1}\,\frac{\psi_0(0)}{\psi_0(y-x)}\,\P_x^{\ssup {\psi_x}}\big\{W_{t_0}\in \d y\big\}.
\end{aligned}
$$
Recall that $\P_x^{\ssup {\psi_x}}$ is ergodic with invariant measure $\mu_x(\d y)=\psi^2_x(y)\,\d y= \psi_0^2(y-x) \d y$. Hence, by the ergodic theorem,
\begin{equation}\label{prooflemma3est10.5}
\begin{aligned}
&\liminf_{t_0\to\infty} \e^{-\lambda t_0/2} \, \E_x\bigg\{\exp\big\{2t_0H(L_{t_0}\otimes \mu_x)\big\}\big| \,W_{t_0}\in {B_1}\bigg\} \\
&\geq \int_{B_1}\frac{\psi_0(0)}{\psi_0(y-x)}\, \psi_0^2(y-x)\d y 
=\psi_0(0)\,  \int_{B_1} \psi_0(y-x)\d y.
\end{aligned}
\end{equation}
We choose, $\sigma_0(\eps)$ such that, for $t_0\geq \sigma_0(\eps)$,
$$
\E_x\bigg\{\exp\big\{t_0H(L_{t_0}\otimes \mu_x)\big\}\big| \,W_{t_0}\in {B_1}\bigg\}\geq \e^{\lambda t_0/3}.
$$
Since we are interested in the regime $|x|> M(\eps)$, we need to pick $M(\eps)\geq \sigma_0(\eps)$. 
We combine this estimate with \eqref{prooflemma3est9} to prove \eqref{claim}. Theorem \ref{thm4} is proved.
\end{proof}

To finish the proof of Theorem \ref{thm3}, we need two more technical estimates. 

Recall that for any $r>0$, $\tau=\tau_r(x)$ denotes the first hitting time of the ball $B_r(x)$ and 
$\xi=\xi _{r}(x)$ denotes the time the Brownian path spends in $B_{1}\left( W_{\tau _{r}\left( x\right) }\right)$ after time $\tau _{r}(x)$, before exiting this ball for the first time. 
It is well known that, for any $\theta>0$, %
\begin{equation}\label{lemma3.3pf-1}
\P\bigg\{ \xi \leq \theta \bigg\} \leq C\exp \bigg\{ -\frac 1{ C\theta}\bigg\}
\end{equation}
for some $C>0$.

\begin{lemma}\label{onboundary} Uniformly in $t>0,r>0,x\in \mathbb{R}^{3}$, 
$$
\lim_{\theta \to 0} \widehat\P_t \bigg\{ \xi _{r}(x)\leq \theta
,\ \tau _{r}(x)\leq t\bigg\} =0.
$$
\end{lemma}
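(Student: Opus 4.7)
The strategy is to use the strong Markov property of Brownian motion at the stopping time $\tau = \tau_{r}(x)$ to transfer the Wiener-measure estimate \eqref{lemma3.3pf-1} to the tilted measure $\widehat\P_t$. The non-Markovian nature of $\widehat\P_t$ is circumvented by exploiting a structural property of $H$: as the bilinear form of the positive-definite Coulomb kernel, it satisfies $2\langle\mu,\Lambda\nu\rangle\leq H(\mu)+H(\nu)$, so that $H$ is concave under convex combinations of probability measures. Applied to $L_t=(s/t)L_s+((t-s)/t)L_{s,t}$ for any $0\leq s\leq t$, this reads
$$tH(L_t)\leq sH(L_s)+(t-s)H(L_{s,t}),$$
which decouples the pre- and post-$\tau$ contributions to the Hamiltonian.

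Applying the inequality above at $s=\tau$ and then the strong Markov property at $\tau$, together with shift-invariance of $H$, yields
$$\E\bigl[e^{tH(L_t)}\1_{\{\xi\leq\theta,\,\tau\leq t\}}\bigr]\leq \E\Bigl[\1_{\tau\leq t}\,e^{\tau H(L_\tau)}\,\E_0\bigl[e^{(t-\tau)H(L_{t-\tau})}\1_{\xi_1(0)\leq\theta}\bigr]\Bigr],$$
where $\xi_1(0)$ denotes the exit time of a $0$-started Brownian motion from $B_1(0)$. Splitting the inner expectation at time $\theta$ by the same mechanism isolates the short window:
$$\E_0\bigl[e^{(t-\tau)H(L_{t-\tau})}\1_{\xi_1(0)\leq\theta}\bigr]\leq g(\theta)\cdot Z_{t-\tau-\theta},\qquad g(\theta):=\E_0\bigl[e^{\theta H(L_\theta)}\1_{\xi_1(0)\leq\theta}\bigr].$$
Brownian scaling gives $\theta H(L_\theta)\leq \sqrt{\theta}\,\|\Lambda L_1^{(V)}\|_\infty$ for a unit-time Brownian motion $V$, and this random variable has finite exponential moments by Proposition~\ref{expdecaysupnorm}. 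Combined with the Wiener estimate \eqref{lemma3.3pf-1} via Cauchy--Schwarz, this gives $g(\theta)\leq C\exp(-1/(C\theta))\to 0$ as $\theta\to 0$, with a bound independent of $t,r,x$.

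The main obstacle will be to close the argument by an inequality of the form
$$\E\bigl[\1_{\tau\leq t}\,e^{\tau H(L_\tau)}\,Z_{t-\tau-\theta}\bigr]\leq C\,Z_t$$
uniformly in $t,r,x$. Since the concavity inequality runs in the wrong direction, one cannot Markov-expand the left-hand side back into $Z_t$ for free. I would handle this by splitting according to $\tau\leq t/2$ versus $\tau>t/2$ and invoking the non-negativity of the Coulomb cross term, which produces the lower bound $tH(L_t)\geq ((t-\tau-\theta)^2/t)\,H(L_{\tau+\theta,t})$; matching the resulting quadratic-versus-linear discrepancy in the exponents is then done through the Donsker--Varadhan asymptotic $(1/t)\log Z_t\to\rho$ for large $t$, and by the trivial bound $Z_t\geq 1$ for bounded $t$. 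Putting the pieces together yields $\widehat\P_t\{\xi_{r}(x)\leq\theta,\,\tau_{r}(x)\leq t\}\leq C\,g(\theta)\to 0$ as $\theta\to 0$, uniformly in $t,r,x$.
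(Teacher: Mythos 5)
The first half of your argument is sound: the concavity/decoupling inequality $tH(L_t)\le sH(L_s)+(t-s)H(L_{s,t})$ is correct, the application of the strong Markov property at $\tau$ is legitimate (since $\xi$ and $L_{\tau,t}$ are functions of the post-$\tau$ increments), and the Brownian-scaling bound $g(\theta)\le C\e^{-1/(C\theta)}$ via Cauchy--Schwarz and the exponential-moment control on $\|\Lambda_1\|_\infty$ is fine. But the step you flag as the ``main obstacle'' is a genuine gap, and the fix you sketch does not close it. The inequality you need is of the form $\E\bigl[\1_{\tau\le t}\,\e^{\tau H(L_\tau)}\,Z_{t-\tau-\theta}\bigr]\le C Z_t$, which for deterministic $\tau=s$ amounts to a supermultiplicativity statement $Z_s\,Z_{t-s}\lesssim Z_t$. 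But the concavity inequality you used goes exactly the opposite way: it yields $Z_t\le Z_s Z_{t-s}$ (submultiplicativity), and there is no corresponding lower bound. Dropping the nonnegative cross term gives $Z_t\ge \E\bigl[\1_{\tau\le t}\,\E_0\bigl[\e^{\frac{(t-\tau-\theta)^2}{t}H(L_{t-\tau-\theta})}\bigr]\bigr]$, which carries \emph{neither} the factor $\e^{\tau H(L_\tau)}$ nor the full linear exponent $(t-\tau-\theta)H$; recovering both requires the cross-term's contribution, which you discarded. The Donsker--Varadhan asymptotic $\tfrac1t\log Z_t\to\rho$ does not rescue this: it only controls $\log Z_t$ up to $o(t)$, and (for $\tau$ of order $t$) the ratio $Z_\tau Z_{t-\tau-\theta}/Z_t$ is governed precisely by those uncontrolled subexponential corrections, so it is not bounded uniformly in $\tau\in[0,t]$. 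Splitting at $\tau\le t/2$ versus $\tau>t/2$ does not remove this: for $\tau\approx t/2$ the exponent $(t-\tau-\theta)^2/t$ is about half of $(t-\tau-\theta)$, a discrepancy linear in $t$ in the exponent.

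The paper sidesteps this trap with a structurally different idea. Rather than splitting the path at $\tau$ and pricing the whole pre-$\tau$ segment (which, through concavity, forces you to pay a full factor $\e^{\tau H(L_\tau)}$), it excises only the short excursion $[\tau,\tau+\xi]$ of length $\le 1$. On the events $\{\|\Lambda_t'\|_\infty\le a\}$, $\{\|\Lambda_{\tau,\tau+\xi}\|_\infty\le a\}$, $\{\xi\le\theta\le 1\}$, removing this piece costs only an additive $O(a)$ in the exponent, uniformly in $t$. The key structural observation is then that the process $\{Y_s\}$ obtained by cutting out the excursion, and the pair $(\xi,\Xi)$ describing the excursion's duration and shape modulo rotations, are \emph{independent} under $\P$ (rotational invariance of the exit distribution from a ball started at its center). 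This lets the paper factor $\P(\xi\le\theta)$ out of the numerator directly, with the remaining expectation compared to $Z_t$ only up to a bounded multiplicative factor involving $\E[\e^{2\|\Lambda_1\|_\infty}]$. No supermultiplicativity of $Z$ is ever invoked. To rescue your approach you would need either a proof that $\sup_{s\le t}Z_s Z_{t-s}/Z_t$ is bounded uniformly in $t$ (not available from the cited results), or a way to retain the cross term you dropped; either way the argument has to change substantially.
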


\begin{proof}
We now split at two time horizons $\tau$ and $\tau+\xi$:%
\begin{equation}\label{lemma3.3pf0}
L_{t}=\frac{\tau }{t}L_{\tau }+\frac{\xi }{t}L_{\tau ,\tau +\xi }+\frac{%
t-\tau -\xi }{t}L_{\tau +\xi ,t}.
\end{equation}
This also leads to a similar decomposition of $\Lambda_t$. 

We also write
$$
\begin{aligned}
&L_{t}^{\prime } =\frac{\tau }{t}L_{\tau }+\frac{t-\tau -\xi }{t}L_{\tau
+\xi ,t}, \\
&\Lambda _{t}^{\prime } =\frac{\tau }{t}\Lambda _{\tau }+\frac{t-\tau -\xi 
}{t}\Lambda _{\tau +\xi ,t}
\end{aligned}
$$
and
$$
\begin{aligned}
&L_{t}^{\prime \prime } =\frac{\tau }{t}L_{\tau }+\frac{t-\tau -\xi }{t}%
L_{\tau +\xi ,t+\xi } \\
&\Lambda _{t}^{\prime \prime } =\frac{\tau }{t}\Lambda _{\tau }+\frac{t-\tau -\xi }{t}\Lambda _{\tau +\xi ,t+\xi }.
\end{aligned}
$$

This leads to one crucial upshot. Consider the process $\left\{ Y_{s}\right\} _{0\leq
s\leq t}$, defined on $\left\{ \tau <t\right\} $ by
$$
Y_{s}=
\begin{cases}
W_{s} &\text{for }s<t \\ 
W_{s+\xi } & \text{for }s\in \left[ \tau ,t\right],
\end{cases}
$$
which jumps at time $\tau $ from $W_{\tau }$ to the boundary of $B_{1}\left(
W_{\tau }\right)$. On $\left\{ \tau <t\right\}$ consider also $%
Z_{s}=W_{\tau +s}-W_{\tau }$ for $s\leq \xi$. This is a process starting
at $0$ observed until the first time it hits the boundary of $B_{1}\left( 0\right)$.
We consider $\left\{ Z_{s}\right\} _{s\leq \xi }$ process modulo rotations,
i.e. we write%
$$
\Xi :=\left[ \left( W_{\tau +s}-W_{\tau }\right) _{0\leq s\leq \xi }\right] 
$$
where $\left[ \cdot \right] $ denotes the equivalence class under the action of
rotational group on the whole path in $\R^3$. Since, the distribution of a Brownian motion 
on the boundary of a ball (when started at the centre of the ball) is the uniform harmonic 
measure on the sphere, the process $\left\{ Y_{s}\right\} $ and $%
\left( \xi ,\Xi \right) $ are independent under $\mathbb{P}$. 

The splitting of the Hamiltonian according to \eqref{lemma3.3pf0} is:
$$
tH\left( L_{t}\right) =t\left\langle L_{t}^{\prime },\Lambda _{t}^{\prime
}\right\rangle +\xi \left\langle L_{\tau ,\tau +\xi },\Lambda _{t}^{\prime
}\right\rangle +\xi \left\langle \Lambda _{\tau ,\tau +\xi },L_{t}^{\prime
}\right\rangle .
$$
We fix some constant $a>0$. Then on the events $\big\{\left\Vert \Lambda _{t}^{\prime
}\right\Vert _{\infty }\leq a\big\}$, $\big\{\left\Vert \Lambda _{\tau ,\tau +\xi
}\right\Vert _{\infty }\leq a\big\}$ and $\big\{\xi \leq \theta \leq 1\big\}$, we have%
\begin{equation}\label{lemma3.3pf0.5}
tH\left( L_{t}\right) \leq t\left\langle L_{t}^{\prime },\Lambda
_{t}^{\prime }\right\rangle +2a\leq t\left\langle L_{t}^{\prime \prime
},\Lambda _{t}^{\prime \prime }\right\rangle +2a.
\end{equation}
Note that $\left\Vert \Lambda _{t}^{\prime }\right\Vert _{\infty }>a$ implies $%
\left\Vert \Lambda _{t}\right\Vert _{\infty }>a$. Therefore,%
\begin{equation}\label{lemma3.3pf1}
\begin{aligned}
&\widehat\P_t\bigg\{ \xi \leq \theta ,\ \tau +\xi \leq t\bigg\}  \\
&\leq\widehat\P_t\bigg\{ \xi \leq \theta ,\ \tau +\xi \leq t,\ \left\Vert
\Lambda _{t}^{\prime }\right\Vert _{\infty }\leq a,\ \left\Vert \Lambda
_{\tau ,\tau +\xi }\right\Vert _{\infty }\leq a\bigg\}  \\
&\qquad+\widehat\P_t\bigg\{ \left\Vert \Lambda _{t}\right\Vert _{\infty
}>a\bigg\} +\widehat\P_t\bigg\{ \left\Vert \Lambda _{\tau ,\tau +\xi
}\right\Vert _{\infty }>a,\ \xi \leq 1\bigg\}.
\end{aligned}
\end{equation}%
We can estimate the first probability on the right hand side above, since by \eqref{lemma3.3pf0.5},
\begin{equation}\label{lemma3.3pf2}
\begin{aligned}
&\mathbb{E}\bigg\{ \exp \left[ tH\left( L_{t}\right) \right]  \, \1\big\{\xi \leq
\theta ,\ \tau +\xi \leq t,\ \left\Vert \Lambda _{t}^{\prime }\right\Vert_{\infty }\leq a,\ \left\Vert \Lambda _{\tau ,\tau +\xi }\right\Vert
_{\infty }\leq a\big\}\bigg\}  \\
&\leq \mathrm{e}^{2a}\,\, \mathbb{E}\bigg\{ \exp \left[ t\left\langle
L_{t}^{\prime \prime },\Lambda _{t}^{\prime \prime }\right\rangle \right]\, \1\{\xi \leq \theta\} \bigg\} .
\end{aligned}
\end{equation}
Furthermore, since the Hamiltonian $t\left\langle L_{t}^{\prime \prime },\Lambda _{t}^{\prime
\prime }\right\rangle $ is independent of $\xi $ under $\mathbb{P}$, we have,
\begin{equation}\label{lemma3.3pf3}
\begin{aligned}
&\E\bigg\{\exp\big[t\big\langle L_t^{\prime\prime},\Lambda _{t}^{\prime \prime }\big\rangle\big]\,\,\1\{\xi\leq \theta\}\bigg\}\\
&= \P\big(\xi\leq\theta\big)\,\, \E\bigg\{\exp\big[t\langle L_t^{\prime\prime},\Lambda _{t}^{\prime \prime}\big\rangle\big]\bigg\}\\
&= \P\big(\xi\leq\theta\big)\,\, \frac{\E\bigg\{\exp\big[t\langle L_t^{\prime\prime},\Lambda _{t}^{\prime \prime}\big\rangle\big]\,\1\{\xi\leq 1\}\bigg]}{\P\big(\xi\leq 1\big)} \\
&\leq\frac{\P\big(\xi\leq\theta\big)}{\P\big(\xi\leq 1\big)}\,\,\E\bigg\{\exp\big[t\big\langle L_{t+1},\Lambda _{t+1}\big\rangle\big]\bigg\}.
\end{aligned}
\end{equation}
Also, since,
$$
t\left\langle L_{t+1},\Lambda _{t+1}\right\rangle \leq 2\left\Vert \Lambda
_{1}\right\Vert _{\infty }+\left\Vert \Lambda _{1,t+1}\right\Vert _{\infty
}+t\left\langle L_{1,t+1},\Lambda _{1.t+1}\right\rangle ,
$$
we have the estimate,
$$
\begin{aligned}
\mathbb{E}\bigg\{ \exp \left[ t\left\langle L_{t+1},\Lambda
_{t+1}\right\rangle \right] \bigg\}
 &\leq \mathbb{E}\bigg\{ \e^{2\left\Vert \Lambda _{1}\right\Vert _{\infty }}\bigg\} \mathbb{E}\bigg\{
\mathrm{e}^{\left\Vert \Lambda _{t}\right\Vert _{\infty }}\mathrm{e}%
^{tH\left( L_{t}\right) }\bigg\} .
\end{aligned}
$$
Summarizing \eqref{lemma3.3pf1}-\eqref{lemma3.3pf3}, we have,
\begin{eqnarray*}
\widehat\P_t\bigg\{ \xi \leq \theta ,\ \tau +\xi \leq t\bigg\}  &\leq
&\frac{\mathbb{P}\left( \xi \leq \theta \right) }{\mathbb{P}\left( \xi \leq
1\right) }\mathbb{E}\bigg\{ \mathrm{e}^{2\left\Vert \Lambda _{1}\right\Vert
_{\infty }}\bigg\} \,\, \mathrm{e}^{2a}\,\, \widehat\E_t\bigg\{ \e^{\left\Vert \Lambda _{t}\right\Vert _{\infty }}\bigg\}  \\
&&+\widehat\P_t\bigg\{ \left\Vert \Lambda _{t}\right\Vert _{\infty
}>a\bigg\} +\widehat\P_t\bigg\{ \left\Vert \Lambda _{\tau ,\tau +\xi
}\right\Vert _{\infty }>a,\ \xi \leq 1\bigg\} .
\end{eqnarray*}%
By \eqref{lemma3.3pf-1}, we then have,
$$
\begin{aligned}
&\lim_{\theta \rightarrow 0}\sup_{t,x}\widehat\P_t\bigg\{ \xi \leq
\theta ,\ \tau +\xi \leq t\bigg\} \\
&\leq \sup_{t}\widehat\P_t\bigg\{
\left\Vert \Lambda _{t}\right\Vert _{\infty }>a\bigg\} +\sup_{t,x}\mathbb{%
\widehat{P}}_{t}\bigg\{ \left\Vert \Lambda _{\tau ,\tau +\xi }\right\Vert_{\infty }>a,\ \xi \leq 1\bigg\} .
\end{aligned}
$$
Since by Corollary \ref{expdecaysupnorm}, $\sup_{t}\widehat\P_t\big\{ \left\Vert \Lambda _{t}\right\Vert
_{\infty }>a\big\} \rightarrow 0$ for $a\rightarrow \infty$, we only have to prove that%
\[
\lim_{a\rightarrow \infty }\sup_{t,x}\widehat\P_t\bigg\{ \left\Vert
\Lambda _{\tau ,\tau +\xi }\right\Vert _{\infty }>a,\ \xi \leq 1\bigg\} =0.
\]%

This can again be proved in the same way exploiting that $\left\Vert \Lambda _{\tau ,\tau
+\xi }\right\Vert _{\infty }$ is a function of the equivalence class $\Xi $ and invoking the above
independence argument. We drop the details to avoid repetition. 
\end{proof}


\begin{lemma}\label{tausum}
 For every $\eta>0$ there exists $r_0=r_0(\eta)\in\N$ such that,  
\begin{equation}
\sup_{t\geq 1} \sum_{r\geq r_0} \, \widehat\P_t\bigg\{\tau_{r}(0)\leq \sqrt r\bigg\}\leq \eta.
\end{equation}
\end{lemma}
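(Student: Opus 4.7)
Since the definition $\tau_r(0)=\inf\{s>0:|W_s|\leq r\}$ evaluates to $0$ for a Brownian path starting at the origin, I interpret the lemma as concerning the first exit time $\tau_r(0)=\inf\{s>0:|W_s|\geq r\}$, which is the only reading under which the assertion is meaningful. My plan is to follow the template of Step~3 in the proof of Theorem~\ref{thm4}: decompose the Hamiltonian via a Markov splitting at a stopping time, apply Cauchy--Schwarz, and combine the exponential-moment bound \eqref{prooflemma3est2.55} on $\|\Lambda_s\|_\infty$ with the Gaussian exit-time estimate $\P(\tau_r(0)\leq\sqrt r)\leq Ce^{-cr^{3/2}}$ (reflection principle).

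First, I would treat the main regime $\{\tau_r(0)\leq t\wedge\sqrt r\}$. Writing $\tau=\tau_r(0)$ and using the convex decomposition $L_t=(\tau/t)L_\tau+((t-\tau)/t)L_{\tau,t}$, one has
$$
tH(L_t)\leq 3\tau\|\Lambda_\tau\|_\infty+\frac{(t-\tau)^2}{t}H(L_{\tau,t}).
$$
The strong Markov property at $\tau$ and shift-invariance of $H$ reduce the expectation of the second factor to $Z_{t-\tau}$, which I bound by $Z_t$ via the monotonicity of $t\mapsto Z_t$ (a consequence of the scaling identity $tH(L_t)\stackrel{d}{=}\sqrt t\,H(L_1)$ and $H(L_1)\geq 0$). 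Cauchy--Schwarz combined with the pathwise inequality $\tau\|\Lambda_\tau\|_\infty\leq\sqrt r\,\|\Lambda_{\sqrt r}\|_\infty$ on $\{\tau\leq\sqrt r\}$ (which holds since $\tau\mapsto\tau\Lambda_\tau(x)=\int_0^\tau ds/|W_s-x|$ is monotone) then yields
$$
\widehat\P_t(\tau\leq t\wedge\sqrt r)\leq\P(\tau\leq\sqrt r)^{1/2}\,e^{K\sqrt r/2}\leq C\,e^{-c'r^{3/2}}
$$
for $r$ large, where the exponential-moment bound \eqref{prooflemma3est2.55} contributes the factor $e^{K\sqrt r/2}$ and the Gaussian tail contributes $e^{-cr^{3/2}/2}$.

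Next I would handle the edge regime $\{t<\tau_r(0)\leq\sqrt r\}$, which only arises when $r>t^2$. Since the density of $\widehat\P_t$ with respect to $\P$ is $\mathcal F_t$-measurable, the path $\omega|_{(t,\infty)}$ evolves as free Brownian motion starting from $W_t$ under $\widehat\P_t$ given $\mathcal F_t$. Conditioning on $\mathcal F_t$ yields
$$
\widehat\P_t(t<\tau_r\leq\sqrt r)\leq C\,e^{-r^{3/2}/8}+\widehat\P_t(|W_t|\geq r/2),
$$
the first term coming from the Gaussian exit-tail on $\{|W_t|\leq r/2\}$. The endpoint probability I bound by Cauchy--Schwarz together with the scaling identity $\E[e^{2tH(L_t)}]=Z_{4t}$, giving $\widehat\P_t(|W_t|\geq k)\leq\P(|W_t|\geq k)^{1/2}\,Z_{4t}^{1/2}/Z_t\leq C\,e^{-k^2/(4t)+\rho't}$ uniformly in $t\geq 1$, where the factor $Z_{4t}^{1/2}/Z_t\leq C\,e^{\rho't}$ follows from the Donsker--Varadhan asymptotics $t^{-1}\log Z_t\to\rho$. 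For $r>t^2$ and $k=r/2$, one has $k^2/(4t)\geq t^3/16$, which dominates $\rho't$ once $t$ exceeds a universal constant; for smaller $t$, the constraint $r\geq r_0$ with $r_0$ large forces $k$ sufficiently large for the Gaussian to dominate.

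The main obstacle is precisely this edge regime: balancing the exponential growth $e^{\rho't}$ of the partition-function ratio against the Gaussian endpoint-tail $e^{-k^2/(4t)}$ uniformly over all $t\geq 1$ requires the restriction $r>t^2$ (which forces $k^2/t\gtrsim t^3$ for large $t$) together with a careful treatment of the bounded-$t$ case where $r_0$ must be chosen large. Combining the two regimes then gives $\sup_{t\geq 1}\sum_{r\geq r_0}\widehat\P_t(\tau_r(0)\leq\sqrt r)\leq\eta$ for $r_0=r_0(\eta)$ sufficiently large.
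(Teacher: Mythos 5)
Your reading of $\tau_r(0)$ as the \emph{exit} time of $B_r(0)$ is the correct interpretation: the hitting-time definition of $\tau_r(x)$ given in Section~3 is vacuous for $x=0$, and the Gaussian estimate $\P\{\tau_r(0)\le\sqrt r\}\le C\e^{-cr^{3/2}}$ that the paper invokes only holds for the exit time.

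Your argument is correct, but it takes a genuinely different and noticeably longer route than the paper's. You split the Hamiltonian at the \emph{random} time $\tau$, which forces you to (a) bound $Z_{t-\tau}$ by $Z_t$ via the monotonicity of $t\mapsto Z_t$, which you supply through the scaling identity $tH(L_t)\stackrel{d}{=}\sqrt t\,H(L_1)$, and (b) treat separately the edge regime $t<\tau\le\sqrt r$ (possible only when $r>t^2$), where you condition on $\mathcal F_t$, use that the post-$t$ path is free Brownian motion under $\widehat\P_t$, and control $\widehat\P_t\{|W_t|\ge r/2\}$ via Cauchy--Schwarz and the scaling identity $\E[\e^{2tH(L_t)}]=Z_{4t}$. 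The paper instead splits at the \emph{deterministic} time $\sqrt r$ and compares $L_t$ with $L_{\sqrt r,\,t+\sqrt r}$, the occupation measure of a time window of length exactly $t$: by the Markov property at $\sqrt r$ and shift-invariance, $\E[\e^{tH(L_{\sqrt r,\,t+\sqrt r})}]=Z_t$ on the nose, so the partition function cancels with no monotonicity input. Moreover, the pointwise inequality $tH(L_t)\le 3\sqrt r\,\|\Lambda_{\sqrt r}\|_\infty+tH(L_{\sqrt r,\,t+\sqrt r})$ holds uniformly in $t,r>0$ — when $\sqrt r>t$, the first term alone already dominates $tH(L_t)\le t\|\Lambda_t\|_\infty\le\sqrt r\|\Lambda_{\sqrt r}\|_\infty$ — so the case distinction disappears as well. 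A single Cauchy--Schwarz applied to the $\mathcal F_{\sqrt r}$-measurable factor then gives the same final bound $\widehat\P_t\{\tau_r\le\sqrt r\}\le\E[\e^{2C\sqrt r\|\Lambda_{\sqrt r}\|_\infty}]^{1/2}\,\P\{\tau_r\le\sqrt r\}^{1/2}$ that you arrive at, and both proofs conclude with \eqref{prooflemma3est2.55} and the Gaussian tail. Your extra observations (the scaling-monotonicity of $Z_t$, and the endpoint-tail estimate $\widehat\P_t\{|W_t|\ge k\}\le C\e^{\rho't-k^2/(4t)}$) are sound and of some independent interest, but they are dispensed with entirely by the choice to split at $\sqrt r$ with the extended window $[\sqrt r,\,t+\sqrt r]$.
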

\begin{proof}
For any $r>0$, we again use the splitting of the Hamiltonian as before and use the estimates obtained in Lemma \ref{onboundary} to get,
$$
\big|H(L_t)-H(L_{\sqrt r, t+ \sqrt r})\big|\leq \frac{C} t  {\sqrt r} \|\Lambda_{\sqrt r}\|_\infty.
$$
Then,
$$
\begin{aligned}
\E\bigg\{\e^{t H(L_t)}\, \1\{\tau_r\leq \sqrt r\}\bigg\}&\leq \E\bigg\{\e^{C\sqrt r\|\Lambda_{\sqrt r}\|_\infty}\1\{\tau_r(0)\leq \sqrt r\}\bigg\} \,\,\E\bigg\{\e^{tH(L_{\sqrt r, t+ \sqrt r})}\bigg\}
\\
&=\E\bigg\{\e^{2C\sqrt r\|\Lambda_{\sqrt r}\|_\infty}\bigg\}^{1/2} \, \P\bigg\{\tau_r(0)\leq \sqrt r\bigg\}^{1/2}\, \E\bigg\{\e^{tH(L_t)}\bigg\}.
\end{aligned}
$$
The first expectation grows like $\exp\{C\sqrt r\}$ by \eqref{prooflemma3est2.55} and the probability $\P\big\{\tau_r(0)\leq \sqrt r\big\}$ decays like $\exp\{-Cr^{3/2}\}$. This proves the lemma.
\end{proof}

Finally we are ready to prove Theorem \ref{thm3}.

{\bf{Proof of Theorem \ref{thm3}:}}
Given any $\eta>0$, by Lemma \ref{onboundary}, we choose $\theta(\eta)>0$ so that
\begin{equation}\label{est1}
\widehat\P_t\bigg\{\xi_{r}(x)\leq \theta(\eta)\bigg\}\leq \eta/3.
\end{equation}
for any $t, r, x$. From lemma \ref{tausum} we choose $r_0(\eta/3)$ so that
\begin{equation}\label{est2}
\widehat \P_t\bigg\{\tau_r(0)\leq \sqrt r\,\mbox{for some}\, r\geq r_0(\eta)\bigg\}\leq \eta/3.
\end{equation}
For any given $\eps>0$, we pick $u(\eps,\eta)>0$ so that $u(\eps,\eta)\geq \max\big\{M(\eps), r_0(\eta/3)^2\big\}$ such that
\begin{equation}\label{est3}
 \frac C{\theta(\eta)}\exp{\big\{-u(\eps,\eta)/C\big\}}\leq \eta/3,
\end{equation}
where $C$ is the constant coming from Theorem \ref{thm4} and this does not depend on $\eps$ or $\eta$. Now, let us choose the 
radius $r_\eps$ as required in \eqref{radiusrequirement}. Then, for
\begin{equation}\label{est4}
|x|\geq k(\eps,\eta):= \max\bigg\{ M(\eps)+ r_\eps, u(\eps, \eta)^2\bigg\},
\end{equation}
 on the complement of the event in \eqref{est2}, we have $\tau_{r_\eps}(x)\geq \tau_{u(\eps,\eta)}^2(0)$. Hence, according to Theorem \ref{thm4}, we have 
\begin{equation}\label{est5}
\widehat \P_t\bigg\{L_t\in U_\eps(\mu_x),\, \xi_{r_\eps}(x)> \theta(\eta),\, \tau_{r_\eps}(x)> u(\eps, \eta)\bigg\}\leq \eta/3.
\end{equation}
Let us combine \eqref{est1}, \eqref{est2} \eqref{est5} and stare at the requirement \eqref{est4}. We have proved Theorem \ref{thm3}.
\qed

\section{Identification of the limiting distribution: Proof of theorem \ref{thm1}}\label{sectionthm1proof}

In this section we will finish the proof of  Theorem \ref{thm1}. Recall that the goal is to show that the distribution of $L_t$ under $\widehat \P_t$ converges towards $\delta_{\mu_Y}$, where $Y$ is an $\R^3$-valued random variable with probability density $\psi_0/\int \psi_0$, where we recall that $\mu_y$ is the probability measure on $\R^3$ with density $\psi_y^2(\cdot)=\psi_0^2(y+\cdot)$, and $\psi_0$ is the maximizer in \eqref{Pekarfor}. Recall that 
$\d(\cdot,\cdot)$ denotes the metric on the set of probability measures on $\R^3$ that induces the weak topology. Introduce, for any probability measure $\mu$ on $\R^3$, the set of locations of best coincidence with a shift of $\mu_0$:
\begin{equation}
A(\mu)=\Big\{y\in\R^3 \colon \d(\mu,\mu_y)=\inf_{x\in\R^3}\d(\mu,\mu_{x})\Big\}.
\end{equation}
Furthermore, let $Y(\mu)$ be the lexicographically smallest site in $A(\mu)$ (which belongs to $A(\mu)$, since $x\mapsto \mu_x$ is continuous and therefore $A(\mu)$ closed). We need a certain continuity property of the map $\mu\mapsto Y(\mu)$, see the following lemma. Recall that we write $U_\eta(\mathfrak m)$ for the $\eta$-neighborhood of  the orbit $\mathfrak m= \{\mu_0\star\delta_x\colon x\in \R^3\}$ in the metric $\d$. 

\begin{lemma}\label{lem-Barysmall} For any $\delta>0$, there is an $\eta>0$ such that $|Y(\mu)-Y(\widetilde \mu)|<\delta$ for any probability measures $\mu,\widetilde\mu\in U_{\eta/2}(\mathfrak m)$ satisfying $\d(\mu,\widetilde\mu)<\eta$.
\end{lemma}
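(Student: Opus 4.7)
The plan is to reduce the lemma to a quantitative continuity estimate for the map $y\mapsto \mu_y$ and then combine it with translation invariance of the Prohorov metric and the triangle inequality.

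First I would establish the following auxiliary input: the function $g(y):=\d(\mu_0,\mu_y)$ on $\R^3$ is continuous, satisfies $g(0)=0$, is strictly positive for $y\neq 0$ (by the shift-uniqueness of the maximizer stated in \eqref{shiftunique}), and obeys
$$
c_\delta:=\inf_{|y|\geq \delta} g(y)\,>\,0
$$
for every $\delta>0$. Continuity and positivity away from $0$ are routine; the main point is that $g$ does not drop to $0$ at infinity. Since $\psi_0$ is a smooth, radially symmetric, rapidly decaying function (by Lieb's analysis \cite{L76}), the measures $\mu_0$ and $\mu_y$ have asymptotically disjoint bulks as $|y|\to\infty$, so $g(y)\to 1$ at infinity. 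Combined with continuity on the compact annulus $\{\delta\leq|y|\leq R\}$ for large $R$, this yields $c_\delta>0$.

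Given this input, the lemma follows quickly. For $\delta>0$ choose $\eta<c_\delta/2$. If $\mu\in U_{\eta/2}(\mathfrak m)$, then by the definition of $Y(\mu)$,
$$
\d\bigl(\mu,\mu_{Y(\mu)}\bigr)\,=\,\inf_{x\in\R^3}\d(\mu,\mu_x)\,\leq\,\eta/2,
$$
and similarly for $\widetilde\mu$. Translation invariance of the Prohorov metric gives $\d(\mu_{Y(\mu)},\mu_{Y(\widetilde\mu)})=g(Y(\widetilde\mu)-Y(\mu))$, so the triangle inequality together with $\d(\mu,\widetilde\mu)<\eta$ yields
$$
g\bigl(Y(\widetilde\mu)-Y(\mu)\bigr)\,\leq\,\d\bigl(\mu_{Y(\mu)},\mu\bigr)+\d(\mu,\widetilde\mu)+\d\bigl(\widetilde\mu,\mu_{Y(\widetilde\mu)}\bigr)\,<\,2\eta\,<\,c_\delta,
$$
and the definition of $c_\delta$ then forces $|Y(\mu)-Y(\widetilde\mu)|<\delta$.

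A small technical point to attend to is the lexicographic tie-breaking in the definition of $Y$: \emph{a priori} one might fear that a tiny perturbation of $\mu$ could switch the selection between two widely separated minimizers in $A(\mu)$. The same triangle-inequality argument applied with $\widetilde\mu=\mu$ shows however that any two elements $y,y'\in A(\mu)$ satisfy $g(y-y')\leq \eta<c_\delta$, so $\diam A(\mu)<\delta$, and the lexicographic choice cannot cause a jump larger than $\delta$. The main obstacle in executing this plan is verifying the coercivity statement $c_\delta>0$: this is the one place where the concrete structure of $\mu_0$ (its decay at infinity, via \cite{L76}) is genuinely used, as opposed to purely metric considerations.
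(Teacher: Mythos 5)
Your proof is correct and follows essentially the same route as the paper's: both rest on the coercivity of $y\mapsto\d(\mu_0,\mu_y)$ away from $y=0$, plus a triangle inequality. The paper phrases the coercivity implicitly ("we pick $\eta>0$ so small that $|y-\widetilde y|<\delta/3$ as soon as $\d(\mu_y,\mu_{\widetilde y})<2\eta$") and takes its existence for granted, whereas you make it explicit as $c_\delta=\inf_{|y|\geq\delta}\d(\mu_0,\mu_y)>0$ and supply a short justification (continuity, shift-uniqueness, and $\d(\mu_0,\mu_y)\to 1$ at infinity) -- this is a worthwhile clarification. One small structural difference: the paper passes through arbitrary $y\in A(\mu)$, $\widetilde y\in A(\widetilde\mu)$ and then invokes a $\diam(A(\mu))<\delta/3$ bound to transfer back to the selections $Y(\mu),Y(\widetilde\mu)$, while you apply the triangle inequality directly to $Y(\mu)$ and $Y(\widetilde\mu)$ (which also lie in the respective $A$-sets), making the diameter bound redundant; you correctly note this as a side remark rather than a needed step.
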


\begin{proof} Given a $\delta>0$, we pick $\eta>0$ so small that, for any $y,\widetilde y\in\R^3$, we have $|y-\widetilde y|<\delta/3$ as soon as $\d(\mu_y,\mu_{\widetilde y})<2\eta$. 

Now we show that $\diam(A(\mu))<\delta/3$ for any $\mu\in U_\eta(\mathfrak m)$. Indeed, given such a $\mu$, and given two sites $y,\widetilde y\in A(\mu)$, we have $\d(\mu,\mu_y)<\eta$ and $\d(\mu,\mu_{\widetilde y})<\eta$. The triangle inequality implies that $\d (\mu_y,\mu_{\widetilde y})<2\eta$, and the choice of $\eta$ implies that $|y-\widetilde y|<\delta/3$. Hence, $\diam(A(\mu))<\delta/3$.

Now we prove the assertion of the lemma, still with the same $\eta$. Let $\mu,\widetilde\mu\in U_{\eta/2}(\mathfrak m)$ be given satisfying $\d(\mu,\widetilde\mu)<\eta$. We pick some $y\in A(\mu)$ and $\widetilde y\in A(\widetilde  \mu)$, then we have $\d(\mu,\mu_y)<\eta/2$ and $\d(\widetilde \mu,\mu_{\widetilde y})<\eta/2$. This implies, via the triangle inequality, that $\d(\mu_y,\mu_{\widetilde y})\leq \d(\mu_y,\mu)+\d(\mu,\widetilde\mu)+\d(\widetilde \mu,\mu_{\widetilde y})<\eta/2+\eta+\eta/2=2\eta$. According to our choice of $\eta$, this implies that $|y-\widetilde y|<\delta/3$. Since both $\diam(A(\mu))$ and $\diam(A(\widetilde \mu))$ are smaller than $\delta/3$, again the triangle inequality implies that $|Y(\mu)-Y(\widetilde \mu)|<\delta$.
\end{proof}

By Theorem \ref{thm-tubeweak}, we know already that $\lim_{t\to\infty}\widehat\P_t(L_t\notin U_\eta(\mathfrak m))=0$ for any $\eta>0$. Therefore, for proving Theorem \ref{thm1}, it suffices to prove that the distribution of $Y(L_t)$ under $\widehat \P_t$ weakly converges towards the measure with density  $\psi_0/\int\psi_0$. 

Now we fix a bounded measurable set $I\subset\R^3$ with Lebesgue-negligible boundary. Thanks to Theorem \ref{thm2}, Theorem \ref{thm1} will then follow from 
\begin{equation}\label{Yconvergence}
\lim_{t\to\infty}\widehat \P_t\big(Y(L_t)\in I\big)=\frac{\int_I \psi_0(x)\,\d x}{\int_{\R^3}\psi_0(x)\,\d x}.
\end{equation}

We will need one more technical fact: 
\begin{equation}\label{thm1proof2}
\lim_{\ell\to\infty}\limsup_{t\to\infty} \sup_{t_0\leq t} \widehat\P_t\big\{\big|W_{t_0}\big| \geq \ell\big\}=0.
\end{equation}
The proof of this follows the same line of argument as in the proof of Theorem \ref{thm2} and we omit the details to avoid repetition. Hence, by \eqref{prooflemma3est2.5}, 
for any sufficiently large $a>0$, any $\eta>0$, any $t_0>0$ and any $\ell>0$, if $t$ is large,
\begin{equation}\label{Sec4proof1.5}
\begin{aligned}
&\widehat \P_t\big(Y(L_t)\in I\big)\\
&= \frac 1 {Z_t} \E\Big[ \e^{t H(L_t)} \1\{Y(L_t)\in I\} \1\{L_t\in U_{\eta/4}(\mathfrak m)\}\\
&\qquad\qquad\qquad\qquad\1\{\|\Lambda_{t_0}\|_\infty\leq a\}\1\{|W_{t_0}|\leq \ell\}\Big]\\
&\qquad +   F_1(t,t_0,\ell)+ F_2(t,t_0,a) + F_3(t,\eta),
\end{aligned}
\end{equation}
where $F_1, F_2$ and $F_3$ are some functions that satisfy $\lim_{\ell\to\infty}\limsup_{t\to\infty} \sup_{t_0\leq t} F_1(t,t_0,\ell)=\limsup_{t_0\to\infty}\limsup_{t\to\infty} \sup_{t_0\leq t} F_2(t,t_0,a)=\lim_{t\to\infty}F_3(t,\eta)=0$,
for $a$ large enough. We abbreviate $F_{\ell, a, \eta}(t_0,t)=F_1(t,t_0,\ell)+ F_2(t,t_0,a) + F_3(t,\eta)$.

Now we introduce two threshold parameters $\eps, \delta>0$ and pick a small $\eta$, depending on $\eps$ and $\delta$, to be determined later, but at least as small as described in Lemma~\ref{lem-Barysmall}. 

Let us again invoke the convex decomposition 
$L_t=\frac{t_0}t L_{t_0}+ \frac{(t-t_0)} t L_{t_0,t}$. We require that $t$ is at least so large that $\d(L_t,L_{t_0,t})<\eta/8$, in particular
$$
L_{t_0,t}\in U_{\eta/8}(\mathfrak m) \quad\Longrightarrow \quad L_t\in U_{\eta/4}(\mathfrak m)\quad \Longrightarrow\quad L_{t_0,t}\in U_{\eta/2}(\mathfrak m)
$$
and therefore, according to Lemma~\ref{lem-Barysmall}, on the event $\{L_t\in U_{\eta/4}(\mathfrak m)\}$,
$$
Y(L_{t_0,t})\in I_{-\delta}\quad \Longrightarrow\quad Y(L_t)\in I\quad \Longrightarrow\quad Y(L_{t_0,t})\in I_\delta,
$$
where $I_{-\delta}=\{x\in I\colon \dist(x,I^{\rm c})>\delta\}$ is the $\delta$-interior of $I$ and $I_\delta=\bigcup_{x\in I}(x-\delta,x+\delta)^3$ is the $\delta$-neighbourhood of $I$. From now on, we proceed only
with the proof of  the upper bound  \lq$\leq$\rq in \eqref{Yconvergence}. We pick up from \eqref{Sec4proof1.5} and proceed as follows.
\begin{equation}\label{Sec4proof2}
\begin{aligned}
&\widehat \P_t\big(Y(L_t)\in I\big)\\
&\leq \frac 1 {Z_t} \E\Big[ \e^{ tH(L_t)}\1\{ |W_{t_0}|\leq \ell\} \1\{\|\Lambda_{t_0}\|_\infty\leq a\}\\
&\qquad\qquad \1\{Y(L_{t_0,t})\in I_{\delta} \} \1\{L_{t_0,t}\in U_{\eps}(\mathfrak m)\}\Big]
\, + F_{\ell, a, \eta}(t_0,t).
\end{aligned}
\end{equation}
Since we can and will additionally require that $\eta/2<\eps$, we already replaced $U_{\eta/2}$ by $U_{\eps}$.

As in earlier sections, the convex decomposition of $L_t$ leads to the splitting
\begin{equation}\label{HsplitSec4}
t H(L_t)= \frac {t_0^2}t H(L_{t_0})+ 2 \frac{t_0 (t-t_0)}{t} \big\langle L_{t_0}, \Lambda_{t_0,t}\big\rangle + \frac{(t-t_0)^2} t H(L_{t_0,t}),
\end{equation}
which we will substitute on the right-hand side of \eqref{Sec4proof2}. We will again estimate the first two summands on the right hand side of \eqref{HsplitSec4}. Note that the first term lies between $0$ and $at_0^2/t$ on the event $\{\|\Lambda_{t_0}\|_\infty\leq a\}$, since $H(L_{t_0})\leq\|\Lambda_{t_0}\|_\infty$. 

Now we want to argue that, with high probability as $t\to\infty$, we can replace $\Lambda_{t_0,t}$ in \eqref{HsplitSec4} by $\Lambda(\mu_y)$ by making an error no more than $O(\sqrt\eps)$ in the exponent on the right-hand side of \eqref{Sec4proof2}. Indeed, we will condition on $Y(L_{t_0,t})$ (substituting it by $y$) and integrate over $Y(L_{t_0,t})\in I_\delta$. Note that, on the event $\{Y(L_{t_0,t})=y \}$, we have that $\{L_{t_0,t}\in U_{\eps}(\mathfrak m)\}=\{L_{t_0,t}\in U_{\eps}(\mu_y)\}$ by definition of $Y(\cdot)$. Furthermore, we recall \eqref{prooflemma3est5}, which implies that for some $C>0$,
$$
\limsup_{t\to\infty}\frac 1t \log \sup_{y\in I_\delta}\widehat\P_t\bigg\{L_{t}\in U_{\eps}(\mu_y), \,\, \, \|\Lambda(L_{t})-\Lambda(\mu_y)\|_\infty> C\sqrt\eps\bigg\}<0.
$$
On the event $\{\|\Lambda(L_{t})-\Lambda(\mu_y)\|_\infty\leq C\sqrt\eps\}$, we use \eqref{prooflemma3est2} and \eqref{prooflemma3est2.5}, to see that, with high probability, $\Lambda_{t_0,t}$ differs from $\Lambda(\mu_y)$ by no more than $O(\sqrt\eps )$.
This gives that, uniformly in $y\in I_\delta$,
$$
\begin{aligned}
\frac 1{Z_t}\E\Big[ &\e^{ 2 \frac{t_0 (t-t_0)}{t} \langle L_{t_0}, \Lambda_{t_0,t}\rangle}\1\{ |W_{t_0}|\leq \ell\} 
\e^{ \frac{(t-t_0)^2} t H(L_{t_0,t})} \1\{L_{t_0,t}\in U_{\eps}(\mu_y)\}\Big]\\
&\leq \e^{O(\sqrt\eps t_0)}\frac 1{Z_t}\E\Big[ \e^{ 2 \frac{t_0 (t-t_0)}{t} \langle L_{t_0}, \Lambda(\mu_y)\rangle}\1\{ |W_{t_0}|\leq \ell\} \e^{ \frac{(t-t_0)^2} t H(L_{t_0,t})} \1\{L_{t_0,t}\in U_{\eps}(\mu_y)\}\Big]\\
&\qquad\qquad+ o(1),
\end{aligned}
$$
as $t\to\infty$. Substituting $y$ by $Y(L_{t_0,t})$ and integrating over $Y(L_{t_0,t})\in I_\delta$, we obtain that
\begin{equation}\label{Sec4proof2.75}
\begin{aligned}
&\widehat \P_t\big(Y(L_t)\in I\big)\\
&\leq\frac 1 {Z_t}\e^{O(t_0^2/t)} \e^{O(\sqrt\eps t_0)} \int_{I_\delta} \,\E\Big[ \e^{ 2 \frac{t_0 (t-t_0)}{t} \langle L_{t_0}, \Lambda(\mu_y)\rangle}\1\{ |W_{t_0}|\leq \ell\} 
\\
&\qquad \e^{ \frac{(t-t_0)^2} t H(L_{t_0,t})} \1\{Y(L_{t_0,t})\in \d y \} \1\{L_{t_0,t}\in U_{\eps}(\mathfrak m)\}\Big] + F_{\ell, a, \eta,\eps}(t_0,t).
\end{aligned}
\end{equation}
where we absorbed the additional error term (which vanishes as $t\to\infty$) in the notation $ F_{\ell, a, \eta,\eps}(t_0,t)$.

Integrating over $W_{t_0}$ and using the Markov property, we obtain
\begin{equation}\label{Sec4proof4}
\begin{aligned}
&\widehat \P_t\big(Y(L_t)\in I\big)\\
&\leq \frac 1 {Z_t}\e^{O(t_0^2/t)}\e^{O(t_0\sqrt \eps)}\int_{I_\delta} \int_{B_\ell} \, \E\Big[\e^{ 2 t_0 \langle L_{t_0}, \Lambda(\mu_y)\rangle} \1\{W_{t_0}\in \d x\}\Big] \\
&\qquad \qquad\qquad\E_x\Big[ \e^{ \frac{(t-t_0)^2} t H(L_{t-t_0})}\1\{Y(L_{t-t_0})\in \d y \} \1\{L_{t-t_0}\in U_{\eps}(\mathfrak m)\}\Big] \\
&\qquad\qquad+  F_{\ell, a, \eta,\eps}(t_0,t).
\end{aligned}
\end{equation}%

Recall the measure tilting argument \eqref{measchange} with the function $\psi^2_y$ and the ergodic theorem for the measure $\P_0^{\ssup{\psi_y}}$ with invariant density $\psi_y^2$, which gives that
\begin{equation}\label{thm1proof7}
\begin{aligned}
\E\Big[ \e^{ 2 t_0  \langle L_{t_0}, \Lambda(\mu_y)\rangle}\1\{W_{t_0}\in \d x\}\Big]
&=\frac{\psi_y(0)}{\psi_y(x)} \e^{ \lambda t_0/2} \P_0^{\ssup{\psi_y}}(W_{t_0}\in \d x) \\
&= \psi_y(0)\psi_y(x) \e^{\lambda t_0/2} \d x \,\, (1+o(1)),
\end{aligned}
\end{equation}
as $t_0\to\infty$, uniformly in $x\in B_\ell$ and $y\in I_\delta$. We now choose $t_0$ so large that the above $1+o(1)$ is below $\e^\delta$, where we recall that $\delta$ was a given threshold parameter. This gives
\begin{equation}\label{Sec4proof5}
\begin{aligned}
&\widehat \P_t\big(Y(L_t)\in I\big)\\
&\leq \frac 1 {Z_t}\e^{O(t_0^2/t)}\e^{O(t_0\sqrt\eps)}\e^\delta \e^{\lambda t_0/2}
\int_{I_{\delta}}\psi_0(y)\int_{B_\ell} \d x\, \psi_0(y-x) \\
&\qquad \qquad\E_x\Big[ \e^{ \frac{(t-t_0)^2} t H(L_{t-t_0})}\1\{Y(L_{t-t_0})\in \d y \} 
\1\{L_{t-t_0}\in U_{\eps}(\mathfrak m)\}\Big] \\
&\qquad\qquad\qquad\qquad+  F_{\ell, a, \eta,\eps}(t_0,t)\\
&\leq\frac 1 {Z_t}\,\,\,\e^{O(t_0^2/t)}\e^{O(t_0\sqrt\eps)}\e^\delta\,\,\e^{\lambda t_0/2} \\
&\qquad\qquad \E_0\Big[\int_{\R^3} \d x \,\psi_0(Y(L_{t-t_0})+x)\1\{Y(L_{t-t_0})+x \in I_\delta\}\\
&\qquad \e^{ \frac{(t-t_0)^2} t H(L_{t-t_0})}\psi_0(Y(L_{t-t_0})) \1\{L_{t-t_0}\in U_{\eps}(\mathfrak m)\}\Big] +  F_{\ell, a, \eta,\eps}(t_0,t)\\
&\leq \e^{O(t_0^2/t)}\e^{O(t_0\sqrt\eps)}\e^\delta \,\,\, \int_{I_\delta}\d x\,\psi_0(x) \\
&\qquad \frac{\E_0\Big[ \e^{ \frac{(t-t_0)^2} t H(L_{t-t_0})}\psi_0(Y(L_{t-t_0})) \1\{L_{t-t_0}\in U_{\eps}(\mathfrak m)\}\Big]}{ \e^{-\lambda t_0/2}\,Z_t}+ F_{\ell, a, \eta,\eps}(t_0,t),
\end{aligned}
\end{equation}
where we used Fubini's theorem and the fact that the distribution of $Y(L_{t-t_0})$ under $\P_x$ is identical to the distribution of $Y(L_{t-t_0})+x$ under $\P_0$ (note the shift-invariance of $U_{\eps}(\mathfrak m)$ and $H(L_{t-t_0})$). In the third step, we shifted the $x$-integration by $Y(L_{t-t_0})$.

The proof of the corresponding lower bound with $I_{-\delta}$ (recall that $I_{-\delta}=\{x\in I\colon \dist(x,I^{\rm c})>\delta\}$ denotes
the $\delta$-interior of the open bounded set $I$) replacing $I_\delta$ on the right hand side of \eqref{Sec4proof5} is analogous modulo slight changes. In fact, to carry out the lower bound corresponding to 
 the second step in \eqref{Sec4proof5}, we note that the shift of the $x$-integration by $y\in I_{-\delta}$ leads to an $x$-integration over a superset of the $\diam(I_{-\delta})$-interior of $B_\ell$, which contains $B_{\ell^\prime}$ for some large $\ell^\prime$. But $\ell^\prime$ can be picked so large that the error when replacing $B_{\ell^\prime}$ afterwards by $\R^3$ is absorbed in the $F_{\ell, a, \eta,\eps}(t_0,t)$ term. 

Hence, for any arbitrary open bounded set $I^{\prime }$ we have that $1\geq \mathbb{\widehat{P}}_{t}(Y\left( L_{t}\right) \in I^{\prime })$ is bounded from below by the 
right hand side of \eqref{Sec4proof5} with $I_{\delta }$ replaced by $I_{-\delta }^{\prime }$. This gives a lower bound for $\e^{-\lambda t_0/2}\,Z_{t}$,
and combining this with the upper bound \eqref{Sec4proof5}, we obtain, by letting first $t\rightarrow \infty$, followed by $\varepsilon \rightarrow 0$, and then $t_{0}\rightarrow\infty$
and $\ell \rightarrow \infty $%
$$
\limsup_{t\rightarrow \infty }\mathbb{\widehat{P}}_{t}(Y\left( L_{t}\right) \in
I)\leq \frac{\int_{I_{\delta }}\psi _{0}(x)\d x}{\int_{I_{-\delta }^{\prime
}}\psi _{0}(x)\d x}.
$$
As $I^{\prime }$ and $\delta$ are arbitrary, we get
$$
\limsup_{t\rightarrow \infty }\mathbb{\widehat{P}}_{t}(Y\left( L_{t}\right) \in
I)\leq \frac{\int_{I}\psi _{0}(x)\d x}{\int_{\mathbb{R}^{3}}\psi _{0}(x)\d x}.
$$
This concludes the proof of the upper bound of the claim in \eqref{Yconvergence}.

In order to prove the corresponding lower bound for \eqref{Yconvergence},%
 we argue essentially in the same way, except that in order to prove the
upper bound for $Z_{t}$, we have to use the already proved tightness of $%
Y\left( L_{t}\right) $ which implies that for arbitrary $\delta $ we can
find a bounded set $I^{\prime }\subset \mathbb{R}^{3}$ with $\mathbb{\widehat{P}}%
_{t}(Y\left( L_{t}\right) \in I^{\prime })\geq 1-\delta $ for all $t$ large enough. This ends the proof of Theorem \ref{thm1}.


\qed

{\bf{Acknowledgement:}} The authors would like to thank S. R. S. Varadhan for reading an early draft of the manuscript and many valuable suggestions. 
The authors also thank an anonymous referee for pointing out a mistake 
in Section 4 that led to a more elaborate version of this part. The third author acknowledges 
the financial support and the hospitality of the Institut f\"ur Mathematik, Universit\"at Z\"urich, during his visits to the department. 



        








\frenchspacing
\bibliographystyle{plain}

\end{document}